\documentclass[11pt,reqno]{amsart}
\usepackage{amsmath}
\usepackage{paralist}
\usepackage[misc]{ifsym}
\usepackage{epsfig} 
\usepackage{epstopdf} 
\usepackage[colorlinks=true]{hyperref}
\usepackage{amssymb}
\hypersetup{urlcolor=blue, citecolor=red}
\allowdisplaybreaks

\newtheorem{theorem}{Theorem}[section]
\newtheorem{corollary}[theorem]{Corollary}

\newtheorem{lemma}[theorem]{Lemma}
\newtheorem{proposition}[theorem]{Proposition}

\theoremstyle{definition}

\newtheorem{remark}[theorem]{Remark}

\def\R{{\mathbb{R}}}
\DeclareMathOperator{\interior}{int}

\title[Traveling fronts for a free interface model of autoignition]
{Existence and uniqueness of traveling fronts for a free interface model of autoignition in reactive jets} 

\author[M. Ma, P. V. Gordon, R. Roussarie, P. Shang and C.-M. Brauner]{}

\begin{document}
\vspace*{-2cm}
\maketitle

\centerline{\scshape
Mingxin Ma$^{{\href{mailto:2311739@tongji.edu.cn}{\textrm{\Letter}}}1}$,
 Peter V. Gordon$^{{\href{mailto:gordon@math.kent.edu}{\textrm{\Letter}}}2}$,
 Robert Roussarie$^{{\href{mailto:Robert.Roussarie@u-bourgogne.fr}{\textrm{\Letter}}}3}$,}
 \centerline{\scshape
 Peipei Shang$^{{\href{mailto:shang@tongji.edu.cn}{\textrm{\Letter}}}1}$
 and Claude-Michel Brauner$^{{\href{mailto:claude-michel.brauner@u-bordeaux.fr}{\textrm{\Letter}}}*4}$}

\medskip

{\footnotesize
 \centerline{$^1$School of Mathematical Sciences,}
 \centerline{Key Laboratory of Intelligent Computing and Applications, Ministry of Education,}
  \centerline{Tongji University,  1239 Siping Rd., Shanghai 200092, China}
} 

\medskip

{\footnotesize
 \centerline{$^2$Department of Mathematical Sciences,
  Kent State University,} \centerline{Kent, OH 44242, USA}
}

\medskip

{\footnotesize
 \centerline{$^3$IMB UMR 5584,
  Universit\'e de Bourgogne Europe,} \centerline{CNRS, F-21000 Dijon, France}
}

\medskip

{\footnotesize
 \centerline{$^4$Institut de Math\'ematiques de Bordeaux UMR CNRS 5251, }
 \centerline{Universit\'e de Bordeaux,  33405 Talence, France}
}

\bigskip
%
\begin{center}
	\emph{Dedicated to Roger Temam on his 85th birthday, with respect and admiration}
\end{center}

\begin{abstract}
We consider a one-dimensional reaction-diffusion model with a piecewise continuous reaction term  that  describes the
propagation of autoignition fronts in reactive co-flow jets within a certain parametric regime. The model is reduced to a free boundary
problem with two interfaces. It is shown that this problem admits a permanent traveling front solution which is unique up to translation.
The result is obtained using a dynamical system approach employing  the stable manifold theorem and the Melnikov integral as the main tools.
\end{abstract}
\footnote{2020 Mathematics Subject Classification. Primary: 35C07, 	35R35, 34A26; Secondary: 34E10, 34C45, 80A25.}
\footnote{Key words and phrases. Traveling wave, autoignition, free interface, saddle point, stable manifold theorem, Melnikov integral.}
\footnote{*Corresponding author: C.-M. Brauner.}


\section{Introduction}

In this paper, we analyze  a one-dimensional reaction-diffusion model
of  weak reactive  autoignition fronts that were recently  experimentally observed  in co-flow  reactive jets
  in  a certain parametric regime. As the model considered in this
 paper is relatively new, we start with a  brief  discussion of  a framework in which the model was derived.

Autoignition (thermal explosion) is a process of spontaneous development of reaction rates in \textcolor{black}{a reactive medium}. \textcolor{black}{The study of this phenomenon} is one of the fundamental problems
of combustion theory \cite{Sem,FK,ZBLM}. Classical   theory of thermal explosion was developed in pioneering works of  N.N. Semenov and  D.A. Frank-\textcolor{black}{Kamenetskii} \cite{FK,Sem}.
The key assumption of  Semenov-Frank-\textcolor{black}{Kamenetskii}  theory, which is justified by numerous experimental studies,  is that prior to autoignition, event
reaction rates in reactive systems are relatively weak, which results in a negligible consumption of the reactive component. This assumption leads to a substantial \textcolor{black}{simplification} of   equations  governing the dynamics of reactive media in this regime. In particular,   Semenov-Frank-\textcolor{black}{Kamenetskii}  theory allows \textcolor{black}{one} to  reduce  \textcolor{black}{the} problem of autoignition to the analysis
of a single reaction-diffusion equation representing the conservation of energy that governs the dynamics of the temperature field within reactive media.
The general approach of Semenov-Frank-\textcolor{black}{Kamenetskii}  turned out to be very robust and  was used by physicists and engineers   to derive models of  autoignition for
various reactive systems, see, e.g., \cite{Law,Will}.

As a rule, autoignition takes  place in a strongly localized region which is called an  {\it ignition kernel}. It is often the case that the
formation of the ignition kernel results in instantaneous  development of strong chemical reaction through the entire reactive system leading to  thermal explosion.
 This situation is described by \textcolor{black}{the} classical  Semenov-Frank-\textcolor{black}{Kamenetskii} theory. In  the  framework of this theory, the formation of the ignition kernel is associated with the formation of \textcolor{black}{a} singularity (finite time blow-up) in the equation describing the dynamics of the temperature field in reactive media \cite{Brezis}. \textcolor{black}{Moreover, this blow-up is complete, that is, the solution cannot be prolonged beyond the blow-up time in any reasonable way as it becomes infinite everywhere in the domain, see Theorem 1 in \cite{Martel}. }
 The scenario  described above is  typical, yet not universal. Indeed, there are combustion devices
  in which  the formation of an  ignition kernel may \textcolor{black}{be followed} by extinction or \textcolor{black}{by} formation
 of an advancing {\it ignition front}  that propagates through the reactive system.
 Hence, in general, the formation of an ignition kernel is a necessary, but not  a sufficient condition for initiation of the combustion process.
  The presence of an  ignition front was identified in the experimental studies of {\it hydrothermal flames}  (flames that persist in highly compressed and overheated water)
 performed at \textcolor{black}{the} high pressure laboratory of NASA Glenn Research Center \cite{exp1,exp2}. In these studies, a co-flow jet consisting of a fuel and oxidizer was injected into \textcolor{black}{a} combustion
 vessel filled with water at \textcolor{black}{a} supercritical state. It was observed that in this \textcolor{black}{experimental} setting\textcolor{black}{,} under appropriate \textcolor{black}{experimental} conditions\textcolor{black}{,} the formation of an ignition kernel,
  \textcolor{black}{analyzed in detail} in \cite{GMN,GHH,GHHK,GGHHKS}, \textcolor{black}{was}  followed by the development
 of an ignition front that propagated toward the injection inlet.   \textcolor{black}{The principal difference} of an ignition front from the conventional flame front is that the reaction within the
 ignition front remains relatively weak. The  experimental observation reported in  \cite{exp1,exp2}  strongly suggests  that the  main assumption of Semenov-Frank-\textcolor{black}{Kamenetskii} theory remains applicable  for modeling of  ignition fronts.
 Two models \textcolor{black}{for the} propagation of ignition fronts in reactive jets\textcolor{black}{, which} utilize Semenov-Frank-\textcolor{black}{Kamenetskii} and  Burke-Schumann theories  as well as  several other simplifying assumptions\textcolor{black}{,} were proposed in \cite{GHH2,MHHG} to gain some qualitative understanding of the mechanisms driving the ignition fronts in reactive co-flow jets  observed in    \cite{exp1,exp2}.
 While the result of the analysis of \textcolor{black}{the} first model \cite{GHH2,BSZ} provided some nontrivial insight \textcolor{black}{into the} propagation of the ignition fronts, some of the important aspects of their dynamics
 remained unexplored. The more sophisticated second model  \cite{MHHG}, despite  its  relative simplicity,   reproduced  all principal
 experimentally observed parametric  regimes the system exhibits: extinction, blow off,   two distinct modes of propagation of the ignition fronts and thermal explosion.
 Regimes relevant to propagation of the ignition fronts will be discussed later in this section.
 We also note that the problem of parametric extinction of hydrothermal   flames  in co-flow reactive jets was  addressed in \cite{GHH3}.

 The model for propagation of the ignition\, front in co-flow\, reactive jet proposed

 \ \vspace*{-14pt}\hspace*{-0.67cm}
  in \cite{MHHG} reads:

 \begin{equation}\label{eq:i1}
 \left\{
 \begin{array}{lll}
 \Delta \Theta -\tilde c\frac{\partial \Theta}{\partial x}-\tilde h \tilde g(\Theta)=0 & \mbox{in} & \Omega\times \mathbb{R},\\
 \nabla \Theta \cdot {\nu} =\tilde q \tilde f(\Theta) &\mbox{on} & \partial \Omega\times \mathbb{R},\\
 \Theta \to \Theta_+ & \mbox{as} & x\to +\infty,\\
 \Theta \to 0 & \mbox{as} & x\to - \infty,
 \end{array}
 \right.
 \end{equation}
 where $\Omega\subset \mathbb{R}^2$ is an open bounded set representing the cross section of the jet;
 $\partial \Omega$ is the boundary of the jet's cross section, \textcolor{black}{which is assumed to be sufficiently smooth}; $\bf{\nu}$ \textcolor{black}{is the} outward normal to the boundary of the jet;
 $\Theta$ is an appropriately
 normalized temperature within the jet; \textcolor{black}{$x$  is a coordinate along the jet axis
pointing in the direction of the injected flow  (note that this coordinate system is opposite to the one used in \cite{MHHG});} 
 $\tilde c$ is an a priori unknown speed of the ignition front;
  $\tilde g(\Theta)$ is \textcolor{black}{a} non-negative, non-decreasing function describing volumetric heat loss;
 $\tilde f(\Theta)$  is a reaction function of the ignition type
 that represents the  heat production on the jet's boundary due to the chemical reaction on a contact surface of the fuel and oxidizer, namely $$\tilde f(\Theta)=f(\Theta) H(\Theta-\theta_{ig}),$$ where $f$ is \textcolor{black}{a} non-negative, non-decreasing
 function, $H$ is the Heaviside step function and  $\theta_{ig}$ is the ignition temperature;
  \textcolor{black}{$\tilde h$ and $\tilde q$} represent the intensities of the heat loss and chemical reaction respectively; and
 $\Theta_+$ is the temperature distribution far behind the ignition front given by the largest positive stable solution (provided it exists)  of the following boundary value problem:
\begin{eqnarray}\label{eq:i2}
\left\{
 \begin{array}{lll}
 \Delta \Theta_+= \tilde h \tilde g(\Theta_+) & \mbox{in} & \Omega, \\
 \nabla \Theta_+ \cdot {\nu} =\tilde q \tilde f(\Theta_+)  & \mbox{on} & \partial \Omega.
 \end{array}
 \right.
 \end{eqnarray}
 \textcolor{black}{A sketch of the reactive co-flow jet configuration used in the derivation of model \eqref{eq:i1} is depicted in Figure \ref{fig-add}.}
 \begin{figure}[h]\vspace*{-14pt}
\centering \includegraphics[width=3in, height=3in]{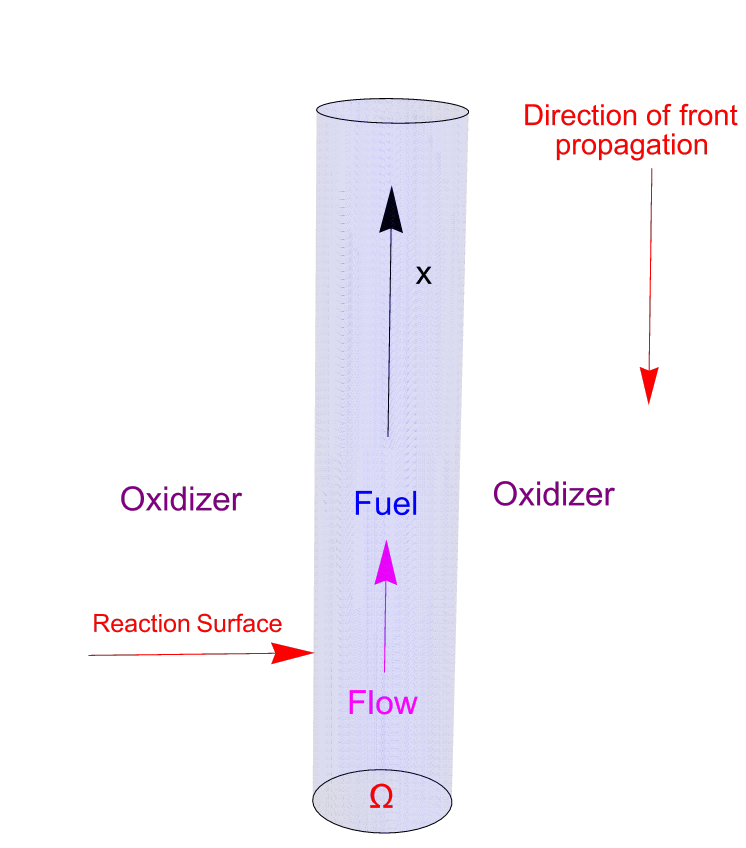}
\caption{A sketch of the reactive co-flow jet. }
\label{fig-add}
\end{figure}
 Problem \eqref{eq:i1} was studied by means of rigorous, formal, and numerical techniques in  \cite{MHHG}. Among other things, it was shown that
 solutions of \eqref{eq:i1} relevant to \textcolor{black}{the} propagation of the ignition fronts exist only \textcolor{black}{in a parametric regime} in which  the heat loss and the reaction
 strength are comparable, \textcolor{black}{see \cite[Appendix A]{MHHG}}. Specifically, there are two regimes of interest: the balanced regime  ($\tilde q \sim  \tilde h \sim O(1)$) in which diffusion, reaction
 and heat loss are of the same order of magnitude, and \textcolor{black}{the} diffusion dominated regime ($\tilde q\sim \tilde h \ll 1$) in which both reaction and heat loss are dominated by
  the diffusion. The latter regime is the subject of the current study. Formal arguments of  \cite{MHHG} show that in the diffusion dominated regime, problem \eqref{eq:i1}
  in the asymptotic limit of proportionally vanishing reaction and heat loss
  can be effectively reduced to a one-dimensional model.
  This reduction is based on the observation that in the diffusion dominated regime\textcolor{black}{,}  the temperature distribution
  in each fixed cross section of the jet
  is essentially constant, and
  hence can be well approximated by its average value. The reduced model for a traveling wave solution with speed $c$ reads:
  \begin{equation}\label{eq:i3}
 \left\{
 \begin{array}{lll}
 \theta_{xx}  -c \theta_x +q \tilde f(\theta)- h \tilde g(\theta)=0 & \mbox{in} &  \mathbb{R},\\
  \theta \to \theta_+ & \mbox{as} & x\to +\infty,\\
 \theta \to 0 & \mbox{as} & x\to - \infty,
 \end{array}
 \right.
 \end{equation}
 where $\theta(x)$ is the average of $\Theta$ over a cross section  (see \cite{MHHG} for details) and $c, h, q$ are appropriately rescaled  $\tilde c, \tilde h, \tilde q$.
 A boundary value problem \eqref{eq:i2} that  determines temperature distribution far behind the ignition front in this limit reduces to an algebraic equation:
 \begin{eqnarray} \label{eq:i4}
 q \tilde  f(s)=h \tilde g (s),
 \end{eqnarray}
 \textcolor{black}{for some $s>0$}, and $\theta_+$ is the largest solution of this equation which exists under appropriate  assumptions on $\tilde f, \tilde g, q, h$.

It was shown in \cite{MHHG} that with \textcolor{black}{a} physically feasible  choice of twice continuously  differentiable functions   $\tilde f, \tilde g,$ and parameters $q, h$,
model \eqref{eq:i1} becomes \textcolor{black}{a} well-known and well-studied problem of  traveling front propagation  with bistable nonlinearity \cite{FM}. \textcolor{black}{The choice of nonlinearities $f$ and $g$ in \cite{MHHG} (denoted by $\tilde{f}$ and $\tilde{g}$ in this paper) physically \textcolor{black}{corresponds}  to a situation  \textcolor{black}{where} the heat loss dominates the reaction for small and large temperatures, whereas the reaction dominates the heat loss for  intermediate values of \textcolor{black}{temperatures}. In view of the results of \cite{FM}, in this case \textcolor{black}{a} traveling front solution \textcolor{black}{to} problem \eqref{eq:i3}  \textcolor{black}{exists}, \textcolor{black}{is} unique (up to translation),
and is nonlinearly stable. It is also of interest, however,  to consider a physically realizable  situation \textcolor{black}{where} an interplay between the reaction and the heat loss is somewhat different from \textcolor{black}{the} one  considered in \cite{MHHG}.}
 In this paper, we consider a case in which $f(\theta)=1,$
which makes
\begin{eqnarray}\label{eq:i5}
\tilde f (\theta)=H(\theta-\theta_{ig}),
\end{eqnarray}
and $$\tilde g(\theta)=g(\theta) H (\theta-\theta_{hl}),$$ where $\theta_{hl}\in (\theta_{ig},\theta_+)$
is the heat loss threshold.   The only strict physical restriction  on $g(\theta)$ is that it is a  non-decreasing function of its argument. We will assume
that  $$g(\theta)=(1+\theta)^4-1,$$
which physically corresponds to a radiative mechanism of the heat loss.
Hence, in what follows, we set
\begin{eqnarray}\label{eq:i5}
\tilde g(\theta)=((1+\theta)^4-1)H(\theta-\theta_{hl}).
\end{eqnarray}
\textcolor{black}{From a physical perspective}, our choice of nonlinearities corresponds to  a situation in which both reaction and heat loss are absent for relatively small temperatures; when the temperature crosses ignition threshold,
the reaction switches on and remains constant and unsuppressed until the temperature rises to the values exceeding  the heat loss threshold above which both reaction and heat loss
become essential. \textcolor{black}{In view of the \textcolor{black}{above discussion,} the assumptions on nonlinearities made in this work are  clearly different from \textcolor{black}{those} made in \cite{MHHG} and physically correspond to a rather different mechanism responsible for establishing a traveling front. We also note that} though the piecewise dependency of the terms involved  in \eqref{eq:i3} adopted in this paper  is  a crude approximation,  it
 preserves major physical effects observed \textcolor{black}{in a real physical system}. A  discussion of \textcolor{black}{the} applicability of  piecewise approximation of nonlinearities in modeling reactive processes of different nature can be found in \cite{BGKS,KGGKS,MBG,MKBSG}.

The primary goal of the present work is the study of the \textcolor{black}{existence  and uniqueness}  of solution for  problem \textcolor{black}{\eqref{eq:i3}-\eqref{eq:i4}} under  the  \textcolor{black}{above-listed  assumptions on the involved nonlinearities}.
Hence, the problem under consideration for the speed $c$ and the temperature $\theta$ is as follows:
\begin{equation}\label{eq:i6}
 \left\{
 \begin{array}{lll}
 \theta_{xx}  -c \theta_x +F(\theta)=0 & \mbox{in} &  \mathbb{R},\\
  \theta \to \theta_+ & \mbox{as} & x\to +\infty,\\
 \theta \to 0 & \mbox{as} & x\to - \infty,
 \end{array}
 \right.
 \end{equation}
 where
 \begin{eqnarray}\label{eq:i7}
 F(\theta) = q H(\theta-\theta_{ig})-h((1+\theta)^4-1)H(\theta-\theta_{hl}),
 \end{eqnarray}
and
\begin{eqnarray}\label{eq:i8}
\theta_+=\left ( 1+\frac{q}{h} \right)^\frac14-1.
\end{eqnarray}
Here and below,  we will  always assume that
\begin{eqnarray}\label{eq:i9}
0<\theta_{ig}<\theta_{hl}<\theta_+,
\end{eqnarray}
which is a necessary condition for \eqref{eq:i6}-\eqref{eq:i8} to have a solution.
\textcolor{black}{A typical profile of the function} $F(\theta)$ is depicted in Figure \ref{fig:1}.
\begin{figure}[h]\vspace*{6pt}
\centering \includegraphics[width=4in]{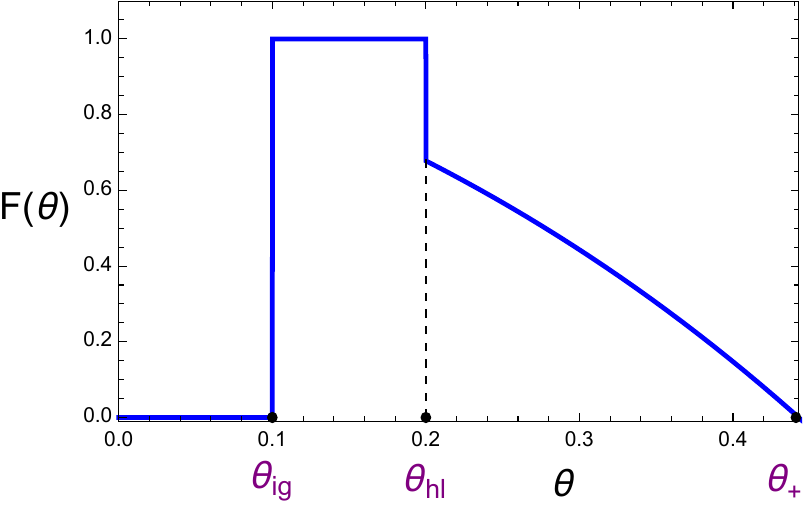}
\caption{Profile of the function $F(\theta)$ with $\theta_{ig}=0.1,~\theta_{hl}=0.2,~q=1, ~h=0.3.$ }
\label{fig:1}
\end{figure}

\section{Main results and organization of the paper}\label{s:2}

In this section, we state the main results of this paper.
We will show that problem \eqref{eq:i6}-\eqref{eq:i8} can be reformulated as
a free boundary problem and subsequently reduced to the analysis of a two-dimensional dynamical system coupled with a transcendental equation.

We look for a solution $(c,\theta)$ of \eqref{eq:i6}-\eqref{eq:i8}, where $\theta$ is a monotone non-decreasing function.
Note that any solution of \eqref{eq:i6}-\eqref{eq:i8} is translationally invariant. To fix translations, we set $\theta(0)=\theta_{ig}$ and
will \textcolor{black}{refer to the point} $x=0$ as the {\it ignition interface}. \textcolor{black}{The} monotonicity of $\theta$ also implies that there exists a unique, unknown, $R>0$
such that $\theta(R)=\theta_{hl}.$  We will refer to $x=R$ as a free interface, the {\it heat loss  interface}.
Therefore, system \eqref{eq:i6}-\eqref{eq:i8} is a free boundary or free interface problem, whose unknown is the triplet $(R,c,\theta)$.

Consequently,  any solution of \eqref{eq:i6}-\eqref{eq:i8} can be viewed as an increasing function that passes through three distinct regions: the first region $(-\infty,0)$ where the
temperature is below $\theta_{ig}$, the second region $(0,R)$ where the temperature is between $\theta_{ig}$ and $\theta_{hl}$, and the third region where the temperature exceeds $\theta_{hl}$.
In the first region, the temperature is equilibrated exclusively by diffusion, in the second region, the temperature balances diffusion and reaction whereas in the third region,
the temperature is stabilized by interplay of diffusion, reaction, and heat loss. Moreover, the temperature profile passes from region to region by preserving its continuity
and continuity of  jumps in fluxes, that is in $C^1$ manner, see Figure \ref{fig:2}.

\begin{figure}[h]
	\centering \includegraphics[width=4.6in]{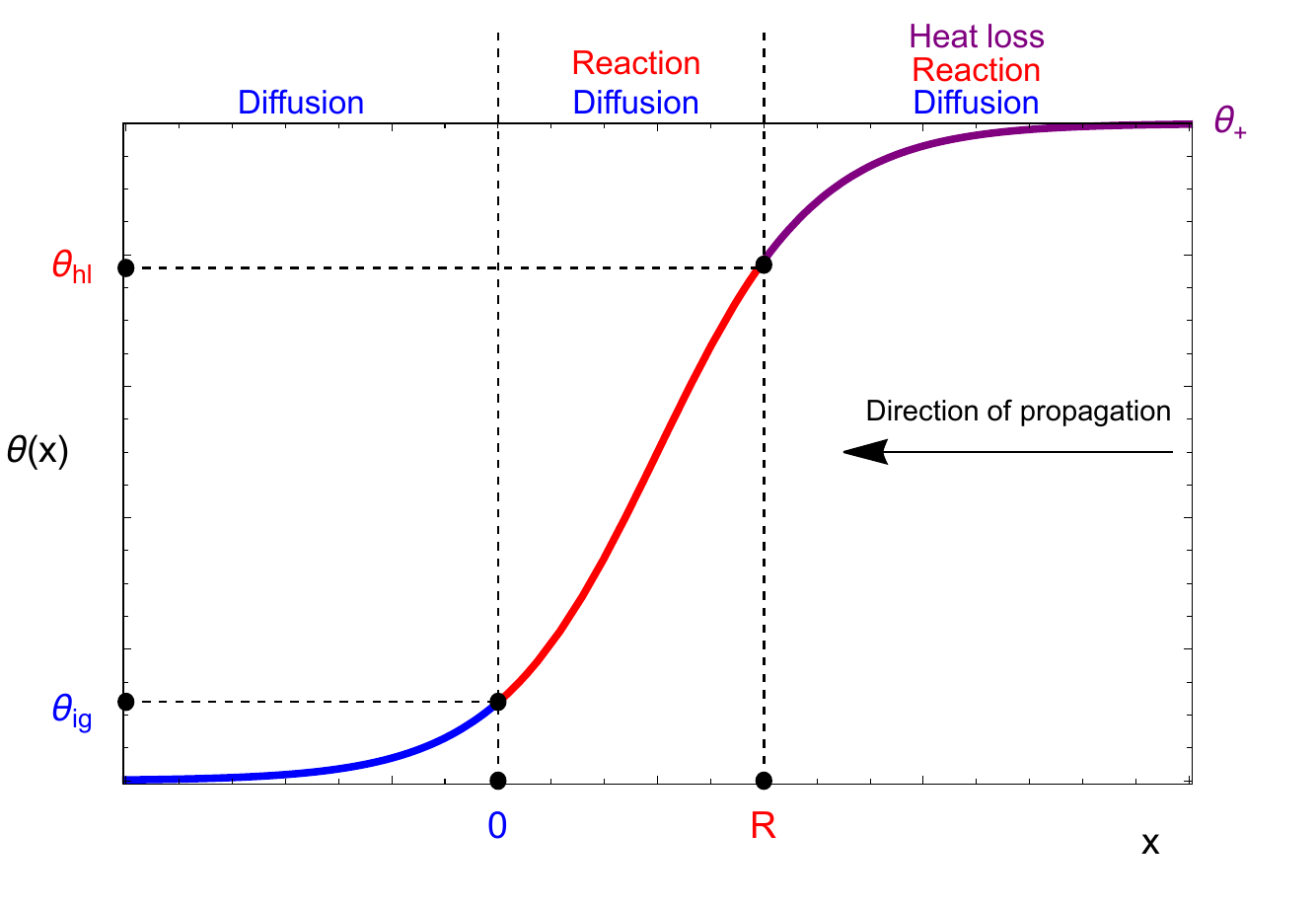}\vspace*{-8pt}
	\caption{A sketch of a  traveling front  solution for problem \eqref{eq:i6}-\eqref{eq:i8}.}
	\label{fig:2}
\end{figure}

These elementary observations allow \textcolor{black}{us} to reduce problem \eqref{eq:i6}-\eqref{eq:i8} to the following set of three problems, where it holds, respectively, $0<\theta<\theta_{ig}$, $\theta_{ig}<\theta<\theta_{hl}$ and $\theta_{hl}<\theta<\theta_{+},$
\begin{align}\label{eq:p1}
 &\left\{
 \begin{array}{ll}
 \theta_{xx}  -c \theta_x=0, &  x\in(-\infty,0),\\
 \theta \to 0 , & x\to - \infty,
 \end{array}
 \right.\\
 \label{eq:p2}
&\; \theta_{xx}  -c \theta_x+q=0, \quad  x\in(0,R),\\
\label{eq:p3}
& \left\{
 \begin{array}{lll}
 \theta_{xx}  -c \theta_x+F(\theta) =0, &  x\in(R, +\infty),\\
 \theta \to \theta_+ , & x\to +\infty,
 \end{array}
 \right.
 \end{align}
where, by abuse of language, $F(\theta)$ stands for the restriction of \eqref{eq:i7} \textcolor{black}{to} $[\theta_{hl},\theta_+]$, i.e.,
\begin{eqnarray}\label{eq:p4}
F(\theta)=q-h((1+\theta)^4-1),
\end{eqnarray}
which satisfies
\begin{align}\label{FT>0}
F(\theta)>0,\,\,\theta\in[\theta_{hl},\theta_+)
\end{align}
and has $\theta_{+}$ as the unique zero. Moreover,
\begin{align}\label{F'<0}
F'(\theta)<0,\,\, \theta\in [\theta_{hl},\theta_+].
\end{align}
In particular,
\begin{align}\label{F'theta+<0}
F'(\theta_+)<0.
\end{align}
The above system is complemented by
compatibility and
no-jump conditions \textcolor{black}{at}  the ignition and heat loss interfaces:
\begin{eqnarray}\label{eq:p5}
&&\theta(0)=\theta_{ig}, \quad \theta(R)=\theta_{hl},\\
\label{eq:p6}
&&[\theta_x]=0, \quad \mbox{at} \quad x=0 \,\,\, \text{and}\,\,\, x=R,
\end{eqnarray}
where $[\cdot]$ stands for a jump across the interface.


 Since  problems \eqref{eq:p1} and \eqref{eq:p2} are clearly linear at fixed $c>0$ and $R>0$,  their solutions satisfying  conditions \eqref{eq:p5}-\eqref{eq:p6}  at the ignition interface can easily  be obtained explicitly and read:
\begin{eqnarray}\label{eq:p7}
\theta(x)=
\left\{
\begin{array}{ll}
\theta_{ig} e^{cx}, & x\in (-\infty,0), \\
\theta_{ig} e^{cx} +\frac{q}{c} x +\frac{q}{c^2} (1-e^{cx}), & x\in(0,R).
\end{array}
\right.
\end{eqnarray}
Compatibility and no-jump conditions at the heat loss interface then require that:
\begin{align}\label{eq:p81}
&\theta(R)=\theta_{hl}=\theta_{ig} e^{cR} +\frac{q}{c} R +\frac{q}{c^2} (1-e^{cR}), \\
\label{eq:p82}
&\theta_x(R)=c\theta_{ig} e^{cR}+\frac{q}{c}\left( 1-e^{cR} \right).
\end{align}
From \eqref{eq:p81}-\eqref{eq:p82}, after some elementary algebra, we obtain
\begin{eqnarray}\label{eq:p9}
\theta_x(R)=c \theta_{hl} -q R.
\end{eqnarray}
Looking for a priori monotone non-decreasing $\theta$ requires that
\begin{eqnarray}\label{inequality}
c \theta_{hl} \geqslant qR.
\end{eqnarray}

To summarize, any solution $(R,c,\theta)$ to system \eqref{eq:p1}-\eqref{eq:p6} is fully determined by \textcolor{black}{the} solutions \textcolor{black}{to} the following problem:
\begin{equation}\label{eq:p10}
 \left\{
 \begin{array}{ll}
 \theta_{xx}  -c \theta_x+F(\theta) =0,\quad x\in(R,+\infty),\\
 \theta(R)=\theta_{hl}, \quad \theta_x(R)=c \theta_{hl} -q R,\\
 \theta\to\theta_{+},\quad x\to+\infty,
 \end{array}
 \right.
 \end{equation}
 where the relation between $c>0$ and $R>0$ is implicitly defined by the transcendental equation \eqref{eq:p81}
subjected to condition \eqref{inequality},
which will play an important role in the analysis, as we shall see.\par

The main result of this paper is as follows.
\begin{theorem} \label{t:1}
		There exists a \textcolor{black}{unique triplet $(R^{\ast}, c^{\ast}, \theta^{\ast}) \in  (\R^{+})^2 \times C^1(\R)$} that verifies system \eqref{eq:p1}-\eqref{eq:p6}.  Moreover, $\theta^*$ is increasing on $\mathbb{R}$ and there exist $K_0>0$ and $\lambda_{-}<0$ (see \eqref{eigenvalues} below) such that
		 	\begin{align}\label{exponential}
				\theta_+-\theta^{\ast}(x)\leqslant K_0e^{\lambda_-x},\:\:x\to+\infty.
			\end{align}
		
\end{theorem}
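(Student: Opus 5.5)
We rewrite \eqref{eq:p10} as a first-order system in the phase plane $(\theta,p)$ with $p=\theta_x$:
\begin{equation*}
\theta'=p,\qquad p'=cp-F(\theta).
\end{equation*}
Since $F(\theta_+)=0$ and $F'(\theta_+)<0$ by \eqref{F'theta+<0}, the equilibrium $(\theta_+,0)$ is a hyperbolic saddle for every $c>0$. Its eigenvalues are
\begin{equation*}
\lambda_\pm=\frac{c\pm\sqrt{c^2-4F'(\theta_+)}}{2},
\end{equation*}
so that $\lambda_-<0<\lambda_+$. By the stable manifold theorem the stable manifold $W^s_c$ is a smooth curve depending smoothly on $c$. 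Its branch entering from $\theta<\theta_+$ can be written as a graph $p=P(\theta;c)$ on $[\theta_{hl},\theta_+)$. To see that the graph is defined on the whole interval, follow the branch backward in $x$. For $p>0$ and $\theta<\theta_+$ we have $F>0$ by \eqref{FT>0}, hence $dp/d\theta=c-F/p$, and $p$ stays positive and bounded. Consequently the branch reaches $\theta=\theta_{hl}$ with $P(\theta_{hl};c)>0$. Problem \eqref{eq:p10} is then solvable, with $\theta$ increasing, exactly when
\begin{equation*}
\Phi(c,R):=c\theta_{hl}-qR-P(\theta_{hl};c)=0.
\end{equation*}
The convergence estimate \eqref{exponential} follows automatically, because trajectories on $W^s_c$ approach $\theta_+$ tangent to the $\lambda_-$ eigendirection.

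We next eliminate $R$. Set
\begin{equation*}
G(c,R)=\theta_{ig}e^{cR}+\frac{q}{c}R+\frac{q}{c^2}\bigl(1-e^{cR}\bigr).
\end{equation*}
Then $G(c,0)=\theta_{ig}<\theta_{hl}$ and $\partial_R G=\theta_x=c\theta_{ig}e^{cR}+\frac qc(1-e^{cR})$. Together with \eqref{inequality}, this allows us to solve \eqref{eq:p81} for a unique $R=R(c)$ on the increasing branch, with $\theta_x>0$ on $(0,R)$, for $c$ in an admissible interval $c>c_0$. One expects $R(c)\to 0$ as $c\to\infty$ and $R(c)\to\infty$ or the branch degenerating as $c\downarrow c_0$. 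The problem thus reduces to finding the zeros of the scalar function
\begin{equation*}
\psi(c)=\theta_x(R(c))-P(\theta_{hl};c)=c\theta_{hl}-qR(c)-P(\theta_{hl};c).
\end{equation*}

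Existence will come from the intermediate value theorem. At $c=0$ energy conservation gives
\begin{equation*}
P(\theta_{hl};0)=\Bigl(2\int_{\theta_{hl}}^{\theta_+}F\Bigr)^{1/2}>0,
\end{equation*}
and as $c\to\infty$ one has $P(\theta_{hl};c)\approx \frac1c\int$-type bounds, so $P$ is small. On the other hand, $\theta_x(R(c))$ behaves like $c\theta_{hl}$ for large $c$, so $\psi>0$ there. At the lower end of the admissible range $\theta_x(R)$ is small or negative, so $\psi<0$.

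Uniqueness will come from strict monotonicity, and this is the step I expect to be the main obstacle. I would show that $\psi$ is strictly increasing by proving two facts. First, $c\mapsto P(\theta_{hl};c)$ is decreasing; this is the Melnikov-type computation. Differentiating $W^s_c$ in $c$ gives
\begin{equation*}
\partial_c P(\theta_{hl};c)=-\frac{1}{P(\theta_{hl})}\int_{R}^{\infty}p(x)^2\,e^{-\int_R^x (c-F/p^2\cdot 0)\,}\,dx\ \text{-type integral},
\end{equation*}
more concretely, $\partial_c(P^2/2)=P\,\partial_c P$ satisfies the linear ODE $\frac{d}{d\theta}(P\partial_cP)=P+c\,\partial_cP$. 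Integrating from $\theta_+$ downward with zero terminal value gives a sign. Second, $\frac{d}{dc}\theta_x(R(c))>0$; this follows from implicit differentiation of \eqref{eq:p81} using \eqref{eq:p9}. Alternatively, a comparison argument works: two fronts with $c_1<c_2$ would have to cross in the phase plane in a way that contradicts the ordering of the stable manifolds.

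Finally, any solution of \eqref{eq:p1}-\eqref{eq:p6} must be monotone. Otherwise $\theta$ would reach a local maximum with $p=0$ and $F>0$, which is impossible. Hence every solution falls into the framework above, and this yields uniqueness of the triplet $(R^*,c^*,\theta^*)$.
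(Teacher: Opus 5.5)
Your strategy is the paper's, parametrized by $c$ instead of $R$. The zeros of your $\psi(c)$ are exactly the intersections of the level set $G=0$ (the paper's decreasing $\varphi$, whose inverse is your $R(c)$) with the matching curve $c\theta_{hl}-qR=v(\theta_{hl},c)$ (the paper's increasing $\psi$). Your two monotonicity facts are correct and correspond to Proposition~\ref{melnikov} and Lemma~\ref{dG}(i). First, the linear ODE for $P\,\partial_cP$, solved with the integrating factor $\exp(-\int c/P\,d\theta)$, is the Melnikov formula after the substitution $d\theta=P\,dx$. Second, implicit differentiation gives $R'(c)=-\partial_c\theta(R;c)/\theta_x(R;c)<0$, because
\begin{equation*}
\partial_c\theta(R;c)=\frac{R}{c}\,(c\theta_{hl}-qR)+\frac{2q}{c^{3}}\bigl(e^{cR}-1-cR\bigr)>0,
\end{equation*}
whence $\frac{d}{dc}\bigl(c\theta_{hl}-qR(c)\bigr)>0$. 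Your backward-in-$x$ argument (positivity plus the energy bound $p^2\leqslant 2\int_{\theta_{hl}}^{\theta_+}F$) is a fine substitute for the Poincar\'e--Bendixson argument of Section~\ref{4.1}.

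The genuine gap is the lower end of the admissible $c$-range. Your intermediate value argument depends on it, yet you leave it at ``one expects''. You must show that, as $c$ decreases, the branch $(R(c),c)$ terminates at a point $(R_0,c_0)$ with $\theta_x(R_0;c_0)=0$, i.e.\ on the line $(\Delta)$; only then does $\psi(c)\to-P(\theta_{hl};c_0)<0$. In the paper this is Lemma~\ref{cR} together with the endpoint $B_0\in(\Delta)$ in Theorem~\ref{theorem2}, used in Corollary~\ref{corol} as $\varphi(R_0)<\psi(R_0)$.

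You can close the gap with your own ingredients:
\begin{itemize}
\item On the branch, \eqref{eq:p9} gives $0<\theta_x(R)=c\theta_{hl}-qR$. Hence $R(c)<c\theta_{hl}/q$, and your alternative $R(c)\to\infty$ cannot occur.
\item Since $R'(c)<0$, as $c$ decreases to the left endpoint $c_*$ of the admissible component, $R(c)$ increases to a limit $R_*\in(0,c_*\theta_{hl}/q]$. In particular $c_*>0$.
\item Suppose $\theta_x(R_*;c_*)>0$. Since $x\mapsto\theta_x(x;c)$ is monotone and $\theta_x(0;c)=c\theta_{ig}>0$, $R$ remains a first hitting time, so the implicit function theorem would continue the branch below $c_*$. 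Hence $\theta_x(R_*;c_*)=0$.
\item The same reasoning at a finite right endpoint of a component would force $\theta_x\to0$ there. This contradicts the strict increase of $c\theta_{hl}-qR(c)>0$. So the admissible set is a single half-line, which your uniqueness claim tacitly uses.
\end{itemize}

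Two smaller points.

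First, tangency of $W^s_c$ to the $\lambda_-$-eigendirection only yields $O(e^{(\lambda_-+\varepsilon)x})$. For the exact rate in the statement, use smoothness of $P$ at $\theta_+$: $\frac{d}{dx}\log(\theta_+-\theta)=\lambda_-+O(\theta_+-\theta)$, with an integrable remainder. This is an elementary substitute for the Sternberg--Chen theorem used in the Appendix.

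Second, in your last paragraph maximum and minimum are swapped. At a critical point where $F>0$ one has $\theta''=-F<0$, so local maxima are possible and it is a local minimum that is excluded. In any case monotonicity is automatic here. The constraints built into \eqref{eq:p1}--\eqref{eq:p6} put the orbit on $(R,\infty)$ onto the branch $p=P(\theta;c)>0$. On $[0,R]$ the function $\theta_x$ is monotone with positive endpoint values.
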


The paper is organized as follows. In Section \ref{s:3}, we  analyze the implicit relation \eqref{eq:p81} by means of the implicit function theorem and prove the following result.

\begin{theorem}\label{theorem2}
	
	Let $Q_{+}$ be the subdomain of the first quadrant \textcolor{black}{delimited by the line $(\Delta) :$ $\displaystyle R \mapsto \frac{qR}{\theta_{hl}}$, that is}
	 \begin{equation}
		Q_{+}=\big\{(R,c) : R>0,\, c\geqslant\frac{qR}{\theta_{hl}}\big\},
	\end{equation}
	and \textcolor{black}{let us} set for $(R,c)$ in 	$Q_{+}$,
	\begin{equation}\label{trans}
		G(R,c)=(e^{cR}-1-cR)q-c^2(\theta_{ig}e^{cR}-\theta_{hl}).
	\end{equation}
	The following hold:
	
(i) There exists a point $B_0=(R_{0},c_{0}) \textcolor{black}{\in (\Delta)}$ verifying $G(R_0,c_0)=0$, such that
the level set $\{(R,c) \in Q_{+} \,|\, G(R,c)=0\}$ is the graph of a smooth decreasing function $\varphi(R)$ which extends to $(0,R_0]$;\par
(ii)  The curve $\varphi(R)$ has  $B_0=(R_{0},c_{0})$ as endpoint at the right \textcolor{black}{such that $\varphi'(R_0^-)=0$,} and the axis $R=0$ as an asymptote at the left.
\end{theorem}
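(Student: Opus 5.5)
The plan is to exploit the fact that $G$ depends on $(R,c)$ essentially through the product $z=cR$, and to parametrize the level set $\{G=0\}$ by $z$. Writing $G(R,c)=q(e^{z}-1-z)-c^2(\theta_{ig}e^{z}-\theta_{hl})$ with $z=cR>0$, we note that $e^z-1-z>0$ for $z>0$. Hence $G=0$ forces $\theta_{ig}e^z-\theta_{hl}>0$, i.e. $z>z_*:=\ln(\theta_{hl}/\theta_{ig})>0$, and then
\begin{equation*}
c^2=q\,\psi(z),\qquad \psi(z):=\frac{e^{z}-1-z}{\theta_{ig}e^{z}-\theta_{hl}},\qquad R=\frac{z}{\sqrt{q\,\psi(z)}} .
\end{equation*}
Conversely, every $z>z_*$ gives a point of the level set in the open first quadrant, so the map $z\mapsto (R(z),c(z))$ is a smooth parametrization of $\{G=0\}\cap\{R,c>0\}$.

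Next I translate the constraint defining $Q_+$ into a condition on $z$. The inequality $c\geqslant qR/\theta_{hl}$ is equivalent to $c^2\geqslant qz/\theta_{hl}$, that is, $\theta_{hl}\psi(z)\geqslant z$. Clearing the positive denominator, this becomes
\begin{equation*}
N(z):=\theta_{hl}(1-e^{z})+\theta_{ig}\,z\,e^{z}\leqslant 0 .
\end{equation*}
The key observation is that a direct computation shows $\psi'(z)=N(z)/(\theta_{ig}e^z-\theta_{hl})^2$. So the admissibility constraint and the monotonicity of $\psi$ are governed by the same function $N$. This coincidence, which I expect to be the heart of the proof, is the main point to verify by hand.

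The function $N$ is then analyzed elementarily. We have $N(0)=0$ and $N'(z)=e^z\big(\theta_{ig}(1+z)-\theta_{hl}\big)$, so $N$ decreases and then increases. Moreover $N(z)\to+\infty$ as $z\to\infty$. It follows that $N$ has a unique positive zero $z_0$, with $N<0$ on $(0,z_0)$ and $N>0$ beyond $z_0$. At $z_*$ we get $N(z_*)=\theta_{hl}(1+z_*-e^{z_*})<0$, so $z_*<z_0$. Therefore the level set in $Q_+$ corresponds exactly to $z\in(z_*,z_0]$. On this range $\psi'<0$ in the interior and $\psi'(z_0)=0$.

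Consequently $c(z)$ is decreasing, while $R(z)=z/\sqrt{q\psi(z)}$ is strictly increasing, being a product of increasing positive factors. The curve is therefore the graph of a smooth function $\varphi=c\circ R^{-1}$ on $(0,R_0]$, with $R_0=R(z_0)$ and $c_0=c(z_0)$. By the equality case $N(z_0)=0$, the endpoint $B_0$ lies on $(\Delta)$. The slope is $\varphi'=c'(z)/R'(z)$; this is negative inside, and vanishes at $z_0$ since $c'(z_0)=0$ while $R'(z_0)>0$. As $z\to z_*^+$ we have $\psi\to+\infty$, so $c\to+\infty$ and $R\to0$, which shows that $R=0$ is a vertical asymptote. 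Smoothness can also be confirmed by the implicit function theorem, since $\partial_cG\neq0$ wherever the parametrization is regular.
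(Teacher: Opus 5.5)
Your argument is correct, and it takes a genuinely different route from the paper's.

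\textbf{The paper's route.} The paper works locally and then continues. It applies the implicit function theorem at the explicit point $A=(a,b)$, and uses Lemma~\ref{dG} for the signs of $\partial_R G$ and $\partial_c G$ on the level set in $\interior(Q_+)$. It then extends the branch maximally. To the right, the shooting analysis of Lemma~\ref{cR} (the function $\kappa(c)$) identifies $B_0$ as the only admissible endpoint. To the left, Lemma~\ref{(0,+)} ($\liminf cR>0$) produces the vertical asymptote. The tangency $\varphi'(R_0^-)=0$ comes from $\partial_R G=c^2(qR-c\theta_{hl})$ vanishing on $(\Delta)$.

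\textbf{Your route.} The substitution $z=cR$ replaces all of this by an explicit global parametrization $c^2=q\psi(z)$, $R=z/c$. The identity $\psi'=N/(\theta_{ig}e^z-\theta_{hl})^2$ is correct, and it ties both the admissibility constraint and the monotonicity to the single function $N$. In fact, with $z=\theta_{hl}c^2/q$ one has $N=-\theta_{hl}\kappa$, so your $z_0$ is exactly the paper's $c_0$ in disguise.

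\textbf{What your approach buys.} It describes the whole level set in $Q_+$ at once, with no maximal-extension or endpoint-exclusion arguments. Smoothness and the horizontal tangency at $B_0$ come for free from $\psi'(z_0)=0$ and $R'(z)\geqslant 1/\sqrt{q\psi(z)}>0$. It also yields sharper information at the left end: $cR\to\ln(\theta_{hl}/\theta_{ig})$, i.e.\ $\varphi(R)\sim \ln(\theta_{hl}/\theta_{ig})/R$ as $R\to 0^+$. Your closing remark about the implicit function theorem is therefore superfluous, since $\varphi=c\circ R^{-1}$ is already smooth.

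\textbf{What the paper's approach buys.} Its IFT-plus-continuation scheme does not rely on the special structure of $G$ that lets $c^2$ be isolated as a function of $cR$. It also keeps the link with the shooting problem, $\theta_x(R)=c\theta_{hl}-qR$, visible throughout.
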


Section \ref{s:4} deals with the study of a generalization of  system \eqref{eq:p10}, and constitutes the core of the paper. \textcolor{black}{We adopt a topological approach in the spirit of \cite{BRSZ},} see \cite{BNS1985} for a shooting method in the case $R=0$.

For further studies, it will be convenient to  rewrite \eqref{eq:p10}  as a dynamical system. Let us set
\begin{equation}\label{notations}
t=x-R,\quad u(t)= \theta(t+R), \quad v(t)= \theta_x(t+R).
\end{equation}

In terms of these notations, problem \eqref{eq:p10}  is  equivalent to the following differential system on the plane:
\begin{equation}\label{eq:p12}
X_c : \left\{
	\begin{array}{ll}
u'(t)=v(t),\\		
v'(t)= cv(t)-F(u(t)),
 \end{array}
\right.
\end{equation}
 with initial conditions at $t=0$
\begin{equation}\label{du}
	u(0)=\theta_{hl}, \quad v(0)= c\theta_{hl}-qR
\end{equation}
\textcolor{black}
{and the asymptotic behavior
\begin{equation}\label{dul}
u \to \theta_+\quad\text{as}\quad t\to +\infty.
\end{equation}
In the system \eqref{eq:p12},
\begin{equation}\label{Fu}
F(u)=q-h((1+u)^4-1).
\end{equation}
See \eqref{FT>0}-\eqref{F'theta+<0} for its properties.
}

A distinctive  feature of system \eqref{eq:p12}, which presents a technical difficulty of its analysis,  is  that its equilibrium point $(u=\theta_{+}, v=0)$ is a  {\it hyperbolic saddle point}.
Hyperbolicity   follows from the opposite signs of   eigenvalues of the linearization of \eqref{eq:p12} about $(u=\theta_{+}, v=0)$.  These eigenvalues are:
\begin{equation}\label{eigenvalues}
	\lambda_\pm= \frac{c\pm \sqrt{c^2-4F'(\theta_+})}{2}.
\end{equation}

It is important to note that the specific choice of function $F$ has a minor impact on the dynamics of \eqref{eq:p12} as long as it has a single equilibrium point as a hyperbolic saddle.

Consequently, in what follows we will consider a more general type of nonlinearity $F$ satisfying \eqref{FT>0} and \eqref{F'theta+<0}. Namely, we consider system \eqref{eq:p12} with $F(u)$  a smooth function defined on a neighborhood $W$ of $[\theta_{hl},\theta_+]$  such that $\theta_+$ is the unique zero of $F$ on $W$, $F'(\theta_+)<0$. \textcolor{black}{Therefore, $p_0=(\theta_{+},0)$ is a hyperbolic saddle point}. Hereafter, we will refer to \eqref{Fu} as a special case.

The analysis of problem \textcolor{black}{\eqref{eq:p12}--\eqref{dul}} will utilize the following two kinds of results. The first one is the stable manifold theorem \cite{[PM]}, according to which there is a half-stable manifold $(u,v)$ depending smoothly on $c$ whose graph reads $v_{c}(u):=v(u,c)$ in $W$ and such that $v_c(\theta_{+})=0$.
The second one is  an explicit formula for evaluating the derivative of the trajectory $v_{c}(u)$ with respect to $c$ via {\it Melnikov integral} (see, e.g., \cite{[M]}). Namely, at any $\bar{u}\in [\theta_{hl},\theta_{+}], \bar{c}>0$, we have
	\begin{equation}\label{eq1}
		\partial_cv(\bar{u},\bar{c})=-\frac{1}{v(\bar{u},\bar{c})}\int _0^{+\infty}e^{-\bar{c}t}v^2_\gamma(t+t(\bar{u}),\bar{c})dt <0,
	\end{equation}
where $v_{\gamma}$ is the second component of the trajectory $\gamma_{c}(t)=(u_{\gamma}(t,c),v_{\gamma}(t,c))$ whose image is the graph of $v_c(u)$ for $u\in [\theta_{hl},\theta_{+})$; its initial condition is  $\gamma_{c}(0)=(\theta_{hl},v_{c}(\theta_{hl}))$; if $\bar{u}\in [\theta_{hl},\theta_{+})$, $t(\bar{u})$ is the time such that $u_{\gamma}(t(\bar{u}),c)=\bar{u}$.

The final result of Section \ref{s:4} is as follows.
\begin{theorem}\label{theorem3}   Consider system \eqref{eq:p12} with $F(u)$  a smooth function satisfying \eqref{FT>0} and \eqref{F'theta+<0} defined on a neighborhood $W$ of $[\theta_{hl},\theta_+]$ such that $\theta_{+}$ is the unique zero of $F$ on $W$. Then, it holds:\par
(i)	there exists a smooth mapping $\psi(R)$  sending diffeomorphically the interval $[0,+\infty)$ on the interval
	$[\psi(0),+\infty)$,  where $\psi(0)>0$; \textcolor{black}{for each $R\geqslant 0$ and $c>0$,   system \eqref{eq:p10} has a solution $\theta_{R,c}(x)$ if and only if $c=\psi(R)$; moreover, this solution is unique};\par
(ii) the curve $\psi(R)$ is increasing from $\psi(0)$ to $+\infty$ and has the line \textcolor{black}{$(\Delta) :$} $\displaystyle R \mapsto \frac{qR}{\theta_{hl}}$ as an asymptote;\par
(iii) the curve $\psi(R)$ is contained in $Q_{+}$, more precisely, for $R\geqslant0$:
	\begin{equation}
	\frac{qR}{\theta_{hl}} < \psi(R)< c_{+}(R), \quad 	c_+(R)=\frac{1}{\theta_{hl}}\Big(\big(2\int_{\theta_{hl}}^{\textcolor{black}{\theta_{+}}}F(u)du\big)^{\frac{1}{2}}+qR\Big).
	 \end{equation}
\end{theorem}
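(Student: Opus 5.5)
Our plan is to reduce everything to the half-stable manifold $v_c(u)$ of the saddle $p_0=(\theta_+,0)$. By the stable manifold theorem, for each $c>0$ the branch of $W^s(p_0)$ entering from $u<\theta_+$ is the graph of a smooth function $v_c(u)=v(u,c)$ on $[\theta_{hl},\theta_+]$, depending smoothly on $c$. We must first check that this graph stays in $\{v>0\}$ and is defined all the way down to $u=\theta_{hl}$. For $u<\theta_+$ we have $F(u)>0$, and along the branch $dv/du=c-F(u)/v$; the branch leaves $p_0$ tangentially to the eigendirection $v=\lambda_-(u-\theta_+)$, which has $v>0$ for $u<\theta_+$. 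A trajectory with $u'=v>0$ cannot reach $v=0$ at some $u<\theta_+$ in backward time, because at such a point $v'=-F<0$. The solution of \eqref{eq:p10} tends to $\theta_+$, so it must lie on $W^s(p_0)$. Hence system \eqref{eq:p10} is solvable if and only if
\begin{equation*}
\Phi(R,c):=v_c(\theta_{hl})-(c\theta_{hl}-qR)=0,
\end{equation*}
and the solution is then unique, being the trajectory through $(\theta_{hl},v_c(\theta_{hl}))$. This gives the reduction in (i).

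Next we study $\Phi$. By the Melnikov formula \eqref{eq1}, $\partial_c v_c(\theta_{hl})<0$, so $\partial_c\Phi=\partial_cv_c(\theta_{hl})-\theta_{hl}<0$, while $\partial_R\Phi=q>0$. We also need the behaviour in $c$ at fixed $R$. First, $v_c(\theta_{hl})>0$ for all $c$, so $\Phi(R,c)>0$ whenever $c\le qR/\theta_{hl}$. Second, we need an upper bound for large $c$. Multiply $vv_u=cv-F$ and integrate from $\theta_{hl}$ to $\theta_+$. Since $v\ge 0$ and $c>0$, this gives $\tfrac12 v_c(\theta_{hl})^2=\int_{\theta_{hl}}^{\theta_+}F\,du-c\int v\,du$, hence $v_c(\theta_{hl})<\bigl(2\int_{\theta_{hl}}^{\theta_+}F\bigr)^{1/2}$, strictly since $\int v>0$. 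Therefore $\Phi(R,c_+(R))<0$. Strict monotonicity of $\Phi$ in $c$ and the intermediate value theorem then give a unique root $c=\psi(R)$ with $qR/\theta_{hl}<\psi(R)<c_+(R)$, which is (iii). By the implicit function theorem $\psi$ is smooth with $\psi'(R)=-\partial_R\Phi/\partial_c\Phi>0$, so $\psi$ is a diffeomorphism onto its image. Finally, $\psi(R)\ge qR/\theta_{hl}\to\infty$, and $\psi(0)>0$ because $c=0$ is excluded: with $c=0$ the root would require $v_0(\theta_{hl})=0$, which is impossible.

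For the asymptote in (ii) we must show $\psi(R)-qR/\theta_{hl}\to0$, that is, $v_{\psi(R)}(\theta_{hl})\to0$ as $c\to\infty$. The bound above gives $c\int_{\theta_{hl}}^{\theta_+}v_c\,du\le\int F$, so $\int v_c\to0$. To turn this into pointwise smallness at $\theta_{hl}$, we plan a comparison argument. For large $c$ the slow manifold is approximately $v\approx F(u)/c$. More precisely, we would show that $v=2\max F/c$ is a barrier crossed only in the right direction: on that line $dv/du=c-F/v\ge c/2>0$ exceeds the slope of the line, so the stable branch, traced backward from $p_0$, stays below it. This yields $v_c(\theta_{hl})=O(1/c)$. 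Combined with $c\theta_{hl}-qR=v_c(\theta_{hl})$, it gives $\psi(R)-qR/\theta_{hl}=O(1/\psi(R))\to0$.

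We expect the main obstacles to be two points: the justification of the global extension and positivity of $v_c$ on $[\theta_{hl},\theta_+]$, uniformly in $c$ (including the near-$p_0$ part of the barrier argument), and this asymptotic estimate. The monotonicity argument itself follows directly from the Melnikov sign.
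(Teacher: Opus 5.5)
Your proposal is correct and follows the paper's proof. It uses the stable-manifold graph $v_c$, the Melnikov sign $\partial_c v<0$ to solve $c\theta_{hl}-qR=v(\theta_{hl},c)$ uniquely with $\psi'>0$, positivity of $v_c$ for the lower bound, and a horizontal barrier $v=V$ with $c>M/V$ for the asymptote. The differences are minor. You get the global graph from the monotonicity of $u$ along the orbit instead of Poincar\'e--Bendixson in the triangle $\mathcal{B}$; the bound on $v$ needed there comes from your own identity $\tfrac12 v_c(u)^2=\int_u^{\theta_+}F(s)\,ds-c\int_u^{\theta_+}v_c(s)\,ds$. You also derive $\psi<c_+$ from that identity rather than from $v(\theta_{hl},c)\leqslant v(\theta_{hl},0)$, which gives the strict inequality directly.
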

Finally, Section \ref{s:5} is devoted to the proof of Theorem \ref{t:1}. \textcolor{black}{The proof of the exponential convergence of $\theta^{\ast}(x)$ to $\theta_{+}$ as $x \to +\infty$, based on an application of Sternberg-Chen linearization theorem, is deferred to Sections \ref{A.1} and \ref{A.2} of the Appendix. In Section \ref{A.3}, we address the issue of the concavity of $\theta^{\ast}$ whenever $x \geqslant R$.}

 \section{Proof of Theorem \ref{theorem2}}\label{s:3}
The proof of Theorem \ref{theorem2} is divided into a series of lemmas and a proposition.

We consider the shooting problem
\begin{equation}\label{2}
		\begin{cases}
			\theta_{xx}-c\theta_{x}+q=0,\:\:x>0,\\
			\theta(0)=\theta_{ig},\:\theta_x(0)=c\theta_{ig}.
		\end{cases}
	\end{equation}
    Its solution is given by
    \textcolor{black}{
    \begin{align}\label{theta1}
    \theta(x)=\theta_{ig} e^{cx} +\frac{q}{c} x +\frac{q}{c^2} (1-e^{cx}),\:\:\forall x\geqslant0,
    \end{align}}
    and the derivative of $\theta$ reads
    \begin{align}\label{solution}
	\theta_{x}(x)=(c\theta_{ig}-\dfrac{q}{c})e^{cx}+\dfrac{q}{c}.
	\end{align}
	To begin with, we are interested in the range of $c$ to guarantee the existence of
\textcolor{black}{positive values of $R$} which verify $\theta(R)=\theta_{hl}$ and therefore are candidates \textcolor{black}{for} the heat loss interface.
	
	\subsection{Preliminaries}\label{prelim}
Let us define:
\begin{equation} \label{A}
	a=\frac{1}{b}( \frac{\theta_{hl}}{\theta_{ig}} -1 )>0, \;\; b= \sqrt{\frac{q}{\theta_{ig}}}.
\end{equation}


The point with coordinates $(a,b)$ will be denoted by $A$ below.
\textcolor{black}{\begin{remark}\label{remarkA}
The motivation of introducing the values $a$ and $b$ is to get an explicit solution of the transcendental equation $G(R,c)=0$. To do that, $b$ is chosen such that the exponential term is eliminated. Then, $a$ is the unique solution to the resulting equation. Therefore, $A=(a,b)$ is the only explicit solution to the equation $G(R,c)=0$. It is worth noting that point $A$ will play a significant role in the implementation of the implicit function theorem, see Lemma \ref{IFT} below.
\end{remark}}
\textcolor{black}{In the following, we explore the condition for $c$ to guarantee the existence of a unique $R$ such that $\theta(R)=\theta_{hl}$ and $\theta_x(R)\geqslant 0$, where $\theta$ is defined in \eqref{theta1}.
    \begin{lemma}\label{cR}
    There exists $c_0>0$ such that when $c=c_0$, we have a unique $R_0$ satisfying $\theta(R_0)=\theta_{hl}$ and $\theta_x(R_0)=0$. For any given $c> c_0$, there exists a unique $R>0$ such that $\theta(R)=\theta_{hl}$ and $\theta_x(R)> 0$. 
    \end{lemma}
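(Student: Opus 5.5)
The plan is to work directly with the explicit formulas \eqref{theta1}--\eqref{solution} and split according to the sign of
\[
\theta_{xx}(x)=(c^2\theta_{ig}-q)e^{cx},
\]
that is, according to whether $c\geqslant b$ or $c<b$, where $b=\sqrt{q/\theta_{ig}}$ is as in \eqref{A}. Throughout, I will use the abbreviation $s=c^2\theta_{ig}/q=c^2/b^2$.

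\emph{Case $c\geqslant b$.} Here $\theta_x(x)\geqslant\theta_x(0)=c\theta_{ig}>0$ for all $x\geqslant 0$. Hence $\theta$ is strictly increasing on $[0,+\infty)$, starts at $\theta(0)=\theta_{ig}<\theta_{hl}$, and $\theta(x)\geqslant\theta_{ig}+c\theta_{ig}x\to+\infty$. So there is exactly one $R>0$ with $\theta(R)=\theta_{hl}$, and automatically $\theta_x(R)>0$.

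\emph{Case $0<c<b$, i.e.\ $s\in(0,1)$.} Now $\theta$ is strictly concave. The derivative $\theta_x$ decreases strictly from $c\theta_{ig}>0$ to $-\infty$, so it vanishes at exactly one point, which solving \eqref{solution} gives as
\[
x_m(c)=-\tfrac1c\ln(1-s).
\]
Thus $\theta$ increases on $[0,x_m]$ and decreases afterwards. Any $R$ with $\theta(R)=\theta_{hl}$ and $\theta_x(R)\geqslant0$ must therefore lie in $(0,x_m]$. On that interval $\theta$ is a bijection onto $[\theta_{ig},M(c)]$, where $M(c):=\theta(x_m(c))$ is the maximum value. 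Consequently such an $R$ exists, and is unique, if and only if $M(c)\geqslant\theta_{hl}$. Moreover $\theta_x(R)=0$ holds exactly when $M(c)=\theta_{hl}$ and $R=x_m$.

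\emph{Computing $M(c)$.} This is the key step. Substitute $e^{cx_m}=1/(1-s)$ into \eqref{theta1} and use $(q/c^2)\,s=\theta_{ig}$. The two terms $\pm\theta_{ig}/(1-s)$ cancel, leaving the closed form
\[
M(c)=-\frac{q}{c^2}\ln(1-s)=\theta_{ig}\,\frac{-\ln(1-s)}{s}=\theta_{ig}\sum_{k\geqslant0}\frac{s^k}{k+1}.
\]
This is strictly increasing in $s\in(0,1)$, hence in $c\in(0,b)$. It tends to $\theta_{ig}<\theta_{hl}$ as $c\to0^+$ and to $+\infty$ as $c\to b^-$.

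\emph{Conclusion.} By the intermediate value theorem and strict monotonicity, there is a unique $c_0\in(0,b)$ with $M(c_0)=\theta_{hl}$. For this $c_0$, the unique $R_0$ with $\theta(R_0)=\theta_{hl}$ and $\theta_x(R_0)=0$ is $R_0=x_m(c_0)$. If $c\in(c_0,b)$, then $M(c)>\theta_{hl}$, so there is a unique $R\in(0,x_m)$ with $\theta(R)=\theta_{hl}$, and $\theta_x(R)>0$. Combined with the case $c\geqslant b$, this proves the lemma for every $c>c_0$.

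The only real obstacle is obtaining the clean expression for $M(c)$. Once the cancellation is noticed, the monotonicity is immediate from the power series. If that cancellation were missed, I would fall back on differentiating $M(c)=\theta(x_m(c),c)$ in $c$ using $\theta_x(x_m)=0$, which reduces to checking the sign of $\partial_c\theta$ at $x_m$.
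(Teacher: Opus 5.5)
Your proof is correct and follows essentially the same route as the paper. Both arguments split at $c=b$, locate for $c<b$ the unique critical point $x_0=-\frac{1}{c}\ln(1-s)$, and compare the maximum $\theta(x_0)=\frac{q}{c}x_0$ with $\theta_{hl}$. The only difference is in how uniqueness of $c_0$ is obtained: you use the monotonicity of $M(c)=\theta_{ig}\,(-\ln(1-s))/s$ via its power series, whereas the paper rewrites $M(c)\geqslant\theta_{hl}$ as $\kappa(c)\leqslant 0$ and studies the sign of $\kappa'$. As a by-product the paper's version also yields $\tilde c<c_0$, which is used later in Proposition \ref{varphi}; your formula gives this too, since $M(\tilde c)=\theta_{ig}\,r\ln r/(r-1)<\theta_{hl}$ with $r=\theta_{hl}/\theta_{ig}$.
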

    \begin{proof}
    First we prove the existence of $c_0$. Denote
\begin{equation}\label{eqec}
\kappa(c):=(1-\frac{\theta_{ig}}{q}c^2)e^{\frac{\theta_{hl}}{q}c^2}-1.
\end{equation}
It is easy to get from \eqref{eqec} that
    \begin{align}\label{ed}
    \kappa'(c)=2ce^{\frac{\theta_{hl}}{q}c^2}(\frac{\theta_{hl}-\theta_{ig}}{q}-\frac{\theta_{ig}\theta_{hl}}{q^2}c^2).
    \end{align}
    The unique root of \eqref{ed} is
    \begin{equation}
    \tilde{c}=
	\big(\frac{q(\theta_{hl}-\theta_{ig})}{\theta_{hl} \theta_{ig}}\big)^{\frac{1}{2}}.
\end{equation}
  Therefore, $\kappa(c)$ is increasing for $0<c<\tilde{c}$ and decreasing for $\tilde{c}<c<b$. As
    $\kappa(0)=0$ and
    $\kappa(c)\to -\infty$ as $c\to +\infty$,
    there exists a unique $c_{0}>0$ such that
    \begin{equation}\label{ec00}
    \kappa(c_0)=0.
    \end{equation}
    Noticing $\kappa(b)=-1$, we obtain that $\tilde{c}<c_0<b$.
    Thus,
      \begin{equation}\label{ic0}
    \kappa(c)\leqslant 0,\quad \forall  c_{0}\leqslant c<b.
    \end{equation}
    Next, we consider the following three cases:\\
    (i) If $c\geqslant b$, i.e., $c\theta_{ig}\geqslant \frac{q}{c}$, then $\theta_{x}(x)\geqslant \frac{q}{c}>0$ and $\theta(x)\to +\infty$ as $x\to +\infty$. Thus, there exists a unique $R$ with $\theta(R)=\theta_{hl}$ and $\theta_{x}(R)>0.$\\
    (ii) If $c_0< c<b$, which gives $c\theta_{ig}< \frac{q}{c}$, then there exists a unique $x_0=\frac{1}{c}\ln (\frac{q}{q-c^2\theta_{ig}})\break>0$ such that $\theta(x)$ is increasing for $0<x<x_0$ and decreasing for $x>x_0$. Moreover, $$\theta(x)\leqslant \theta(x_0)=\frac{q}{c}x_0,\quad \forall x\geqslant 0.$$
   From \eqref{ec00}-\eqref{ic0}, one has
    $$\kappa(c)=(1-\frac{\theta_{ig}}{q}c^2)e^{\frac{\theta_{hl}}{q}c^2}-1< 0,\quad \forall c_0<c<b,$$
    which is equivalent to
    $$\theta_{hl}< \theta(x_0),\quad \forall c_0< c<b.$$
    Thus, there exist $R_1\in (0,x_0)$ and $R_2\in(x_0,+\infty)$ such that
    \begin{align*}
    \theta(R_1)=\theta(R_2)=\theta_{hl}, \,\,\,\theta_x(R_1)>0\,\,\, \text{and}\,\,\, \theta_x(R_2)<0.
    \end{align*}
    We can thus choose $R=R_1$.\\
    (iii) If $c=c_0$, from \eqref{ec00}, one has
    $$
    \kappa(c_0)=(1-\frac{\theta_{ig}}{q}c_0^2)e^{\frac{\theta_{hl}}{q}c_0^2}-1=0,
    $$
    which is equivalent to
    $$
    \theta_{hl}= \theta(x_0).
    $$
    Thus, $R_{0}=x_0$ and $\theta_x(R_{0})=0$. Finally, integrating the equation in (\ref{2}) from $0$ to $R_{0}$ yields a relation between $R_0$ and $c_0$ that reads
    \begin{align}\label{relation_C0R0}
    	c_0\theta_{hl}=qR_{0}.
    \end{align}
    \end{proof}}
    \subsection{Application of the implicit function theorem}
     Let $R>0$ be given in Lemma \ref{cR}. Then, as we have seen, the compatibility and no-jump conditions provide a transcendental relationship between $R$ and $c$ in (\ref{eq:p81}) or equivalently defined by $G(R,c)=0$, where $G(R,c)$ is given by \eqref{trans}.
	
	\textcolor{black}{Let us recall that} $Q_{+}=\{(R,c) : R>0, c\geqslant\frac{qR}{\theta_{hl}}\}$; clearly, the mapping $(R,c) \mapsto G(R,c)$ is $C^{\infty}$ on $Q_{+}$. 
	\textcolor{black}{Hereafter, we denote by $\interior({Q}_{+})$ the interior of ${Q}_{+}$.}
    \begin{lemma}\label{dG}
  (i) If $(R,c) \in \textcolor{black}{\interior({Q}_{+})}$ satisfies $G(R,c)=0$, then $\frac{\partial G}{\partial R}(R,c)<0$ and $\frac{\partial G}{\partial c}(R,c)<0$;\par
    (ii) if $(R,c)  \in \textcolor{black}{\interior({Q}_{+})}$ satisfies $cR>\frac{\theta_{hl}-\theta_{ig}}{\theta_{ig}}$, then $\frac{\partial G}{\partial c}(R,c)<0$.
    \end{lemma}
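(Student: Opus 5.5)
The plan is to compute both partial derivatives of $G$ explicitly. We then control their signs using two ingredients: the interior condition $c\theta_{hl}>qR$, and, on the level set, the relation $G=0$. Throughout we write $s=cR>0$. Differentiating \eqref{trans} gives
\begin{align*}
\frac{\partial G}{\partial R}&=c\big[(q-c^2\theta_{ig})e^{s}-q\big],\\
\frac{\partial G}{\partial c}&=qR(e^{s}-1)-2c(\theta_{ig}e^{s}-\theta_{hl})-c^2\theta_{ig}Re^{s}.
\end{align*}
By \eqref{solution}, $c\,\theta_x(R)=(c^2\theta_{ig}-q)e^{s}+q$. Hence $\frac{\partial G}{\partial R}=-c^2\theta_x(R)$, where $\theta$ is the shooting solution \eqref{theta1}.

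For (i), the identity $G(R,c)=0$ is exactly \eqref{eq:p81}. Consequently \eqref{eq:p9} holds, and $\theta_x(R)=c\theta_{hl}-qR$. In $\interior(Q_+)$ this quantity is strictly positive, so $\frac{\partial G}{\partial R}<0$. For the $c$-derivative I would use the quasi-homogeneity of $G$ in the variable $s=cR$. The terms of $G$ depending only on $s$ are annihilated by the operator $c\partial_c-R\partial_R$, which gives
$$c\frac{\partial G}{\partial c}-R\frac{\partial G}{\partial R}=-2c^2(\theta_{ig}e^{s}-\theta_{hl}).$$
On $\{G=0\}$ we have $c^2(\theta_{ig}e^{s}-\theta_{hl})=q(e^{s}-1-s)>0$. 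Therefore $c\,\partial_cG=R\,\partial_RG-2q(e^{s}-1-s)<0$, since both terms on the right are negative.

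For (ii) we no longer have $G=0$, so I would estimate $c\,\partial_cG$ directly:
$$c\frac{\partial G}{\partial c}=qs(e^{s}-1)-c^2\theta_{ig}(2+s)e^{s}+2c^2\theta_{hl}.$$
In $\interior(Q_+)$ we have $qR<c\theta_{hl}$, that is $qs<c^2\theta_{hl}$. Since $e^{s}-1>0$, this gives
$$c\frac{\partial G}{\partial c}<c^2\big[\theta_{hl}(e^{s}+1)-\theta_{ig}(2+s)e^{s}\big].$$
The hypothesis $s>(\theta_{hl}-\theta_{ig})/\theta_{ig}$ is equivalent to $\theta_{hl}<\theta_{ig}(1+s)$. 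The bracket is therefore strictly less than $\theta_{ig}\big[(1+s)(e^{s}+1)-(2+s)e^{s}\big]=\theta_{ig}(1+s-e^{s})\leqslant 0$, which yields $\frac{\partial G}{\partial c}<0$.

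The only step requiring some insight is the sign of $\partial_cG$ in (i). A naive substitution of $G=0$ leaves an expression of indefinite-looking sign. The Euler-type identity above resolves it, because it reduces everything to the already-known sign of $\partial_RG$ together with the elementary inequality $e^{s}>1+s$. The rest is routine algebra.
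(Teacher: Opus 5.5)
Your proof is correct and is essentially the paper's argument. On $\{G=0\}$ you reach the same identities $\partial_R G=c^2(qR-c\theta_{hl})$ and $c\,\partial_c G=R\,\partial_R G-2q(e^{cR}-1-cR)$; your Euler-type identity is just a tidy way of organizing the substitution of $G=0$ that the paper performs directly, and for (ii) you use the same bound $qR<c\theta_{hl}$ followed by the hypothesis on $cR$ and $e^{s}\geqslant 1+s$.
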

    \begin{proof}
    (i) If $(R,c)$ satisfies $G(R,c)=0$, since $c\theta_{hl}>qR$, one has
    \begin{align*}
    &\frac{\partial G}{\partial R}(R,c)=c^2(qR-c\theta_{hl})<0,\\
    &\frac{\partial G}{\partial c}(R,c)=cR(qR-c\theta_{hl})-\frac{2q}{c}(e^{cR}-1-cR)<0.
    \end{align*}\par
    (ii) If $(R,c)$ satisfies $cR>\frac{\theta_{hl}-\theta_{ig}}{\theta_{ig}}$, thanks to $c\theta_{hl}>qR$,
    \begin{align*}
    \frac{\partial G}{\partial c}(R,c)&=qR(e^{cR}-1)-c\theta_{ig}e^{cR}(2+cR)+2c\theta_{hl}\\
    &<c((\theta_{hl}-2\theta_{ig}-cR\theta_{ig})e^{cR}+\theta_{hl}).
    \end{align*}
    Since
    $$\theta_{hl}-2\theta_{ig}-cR\theta_{ig}<\theta_{hl}-2\theta_{ig}-\frac{\theta_{hl}-\theta_{ig}}{\theta_{ig}}\theta_{ig}=-\theta_{ig}$$ and $e^{cR}>cR+1$, we obtain
    \begin{align*}
    \frac{\partial G}{\partial c}(R,c)<c(-\theta_{ig}(cR+1)+\theta_{hl})<c(-\theta_{ig}(\frac{\theta_{hl}-\theta_{ig}}{\theta_{ig}}+1)+\theta_{hl})=0.
    \end{align*}
    \end{proof}

    \begin{lemma}\label{(0,+)}
    Let $(R,c)\in Q_{+}$ such that $G(R,c)=0$. Then, $c\rightarrow +\infty$ as $R\rightarrow 0^{+}$.
	\end{lemma}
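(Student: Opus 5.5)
Suppose $(R,c)\in Q_+$ with $G(R,c)=0$ and $R\to 0^+$. We will show that $c$ must blow up by arguing by contradiction. Assume there is a sequence $R_n\to0^+$ with $G(R_n,c_n)=0$ and $c_n$ bounded. Since $(R_n,c_n)\in Q_+$ forces $c_n\geqslant qR_n/\theta_{hl}>0$, we may extract a subsequence with $c_n\to \bar c\in[0,+\infty)$. Then $c_nR_n\to 0$.

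Next we rewrite $G(R,c)=0$ in a form that isolates the small quantity $cR$. Setting $s=cR$ and dividing \eqref{trans} by $c^2$, the equation $G=0$ reads
\begin{equation*}
\theta_{ig}e^{s}-\theta_{hl}=\frac{q}{c^2}\,(e^{s}-1-s)=qR^2\,\frac{e^{s}-1-s}{s^2}.
\end{equation*}
This is just $\theta(R)=\theta_{hl}$ from \eqref{eq:p81}. Along the sequence, $s_n=c_nR_n\to0$, so the right-hand side behaves like $qR_n^2\cdot\tfrac12(1+o(1))\to 0$. The left-hand side tends to $\theta_{ig}-\theta_{hl}<0$ by \eqref{eq:i9}. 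This is a contradiction, so $c_n$ cannot stay bounded, and hence $c\to+\infty$ as $R\to0^+$.

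The point needing care is the case $\bar c=0$. There $q/c_n^2$ is unbounded, so the form $q(e^s-1-s)/c^2$ is not obviously small. Writing the right-hand side as $qR^2\,(e^s-1-s)/s^2$ resolves this, because the function $(e^s-1-s)/s^2$ is bounded near $s=0$ (it tends to $1/2$). Equivalently, one can argue directly from \eqref{theta1}: $\theta(R)=\theta_{ig}e^{cR}+q\int_0^R\frac{e^{c(R-y)}-1}{c}\,dy$ is at most $\theta_{ig}e^{cR}+qR^2e^{cR}$, and this tends to $\theta_{ig}<\theta_{hl}$.

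Beyond this contradiction there is nothing else to prove, since the statement only concerns the behaviour of $c$ as $R\to0^+$.
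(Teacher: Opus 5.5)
Your proof is correct and is essentially the paper's argument: both rewrite $G=0$ in terms of $m=cR$ and observe that $cR\to 0$ is incompatible with the equation, since one side tends to $\theta_{ig}-\theta_{hl}<0$ while the other vanishes. The only difference is framing: the paper proves $\liminf_{R\to0^+} cR>0$, whereas you derive a contradiction from assuming $c$ bounded.

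There is one small slip in your side remark. The integral term enters $\theta(R)$ with a minus sign, $\theta(R)=\theta_{ig}e^{cR}-q\int_0^R\frac{e^{c(R-y)}-1}{c}\,dy$. This does not hurt your bound; it makes it easier. It gives $\theta_{ig}e^{cR}\geqslant\theta_{hl}$ directly, hence $c\geqslant \ln(\theta_{hl}/\theta_{ig})/R\to+\infty$.
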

    \begin{proof}
    \textcolor{black}{Let us set $m(R)=cR>0$ and
    \begin{align*}
    \liminf_{R\to 0^{+}}m(R)=L,
    \end{align*}
    where $0\leqslant L \leqslant +\infty$.
    Then, $m(R)$ verifies
    \begin{align}\label{m}
    R^{2}(e^{m(R)}-1-m(R))q=m^{2}(R)(\theta_{ig}e^{m(R)}-\theta_{hl}),
    \end{align}
    and there exists a sequence $\{{R}_{k}\}$ such that
      \begin{align*}
      \lim_{{R}_{k}\to 0^{+}}m({{R}}_{k})=L.
      \end{align*}}
  \textcolor{black}{
      Next, we prove $L\neq 0$. Let us assume by contradiction that $L=0$, then $m({{R}}_{k})\to 0$ as ${R}_{k}\to 0^{+}$. By Taylor formula, the left-hand side of (\ref{m}) reads $$\frac{q}{2}{{R}}_{k}^2m^2({{R}}_{k})+o(m^2({R}_{k}))$$
      and the right-hand side is
      $$(\theta_{ig}-\theta_{hl})m^2({R}_{k})+o(m^2({R}_{k})),$$ which is ruled out as ${
      R}_{k}\to 0^{+}$. Therefore, one can obtain
      $$\displaystyle \liminf_{R\rightarrow 0^+}m(R)=L>0.$$
     Finally,
     \begin{align*}
     \displaystyle c=\frac{m(R)}{R}\rightarrow +\infty\,\,\, \text{as}\,\,\, R\rightarrow 0^{+}.
     \end{align*}}
    \end{proof}


   	 We will use the implicit function theorem to prove the existence and uniqueness of a curve $G(R,c)=0$ contained in $Q_{+}$.  It is easy to verify that the point $A = (a,b)$ (see \eqref{A}) belongs to $Q_{+}$. The idea (see also \cite{BSZ}) is to apply the implicit function theorem at $A$ and prove the existence of local branches of the curve in both directions; then, extend them.
   	
   	As discussed in Remark \ref{remarkA}, the point $A =(a, b)$ plays a significant role in our analysis. Another significant point is the point with coordinates $(R_0, c_0)$, which we will denote by $B_0$ below. From the proof of Lemma \ref{cR}, we know that $0<a<R_{0}$, $c_{0}<b$, and $G(R_{0},c_{0})=0$. Furthermore, the relation \eqref{relation_C0R0} shows that $B_0$ belongs to the line $(\Delta)$ for $c_0\theta_{hl} = qR_0$. Therefore, $B_0$ is included in $Q_+$, and more specifically, on the boundary of $Q_+$, see Figure \ref{Fig_c0new}.
   	
    \begin{lemma}\label{IFT}
    There exists a neighborhood $U$ of $A=(a,b)$ in $Q_{+}$, an open interval $I\subset (0,R_{0})$ such that $a \in I$, and a real valued function $\widetilde{\varphi}:I\rightarrow \mathbb{R}$ such that, for any $R\in I$, $(R, \widetilde{\varphi}(R))\in U$ and
    $$G(R,\widetilde{\varphi}(R))= 0, \quad \widetilde{\varphi}(a)=b.$$
    Moreover, $\widetilde{\varphi}(R)$ is smooth on $I$ and monotonically decreasing with respect to $R$.
    \end{lemma}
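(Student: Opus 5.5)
The plan is to apply the classical implicit function theorem to $G$ at the point $A=(a,b)$, after checking three things: that $A$ lies on the level set $\{G=0\}$, that $A$ lies in $\interior(Q_+)$, and that $\partial_c G(A)\neq 0$. The sign of $\widetilde\varphi'$ will then follow from Lemma \ref{dG}(i).

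\emph{Step 1: $G(a,b)=0$.} By \eqref{A} we have $b^2\theta_{ig}=q$, and $m:=ab=\frac{\theta_{hl}}{\theta_{ig}}-1$. Substituting into \eqref{trans}, the exponential terms cancel:
\begin{align*}
G(a,b) &= q(e^{m}-1-m) - b^2\theta_{ig}e^{m} + b^2\theta_{hl}\\
&= q(e^{m}-1-m) - qe^{m} + \frac{q\theta_{hl}}{\theta_{ig}}\\
&= q\Big(\frac{\theta_{hl}}{\theta_{ig}}-1-m\Big)=0.
\end{align*}

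\emph{Step 2: $A\in\interior(Q_+)$.} We need $b\theta_{hl}>qa$, which is equivalent to $b^2\theta_{hl}>q\big(\frac{\theta_{hl}}{\theta_{ig}}-1\big)$. Since $b^2\theta_{hl}=\frac{q\theta_{hl}}{\theta_{ig}}$, this reduces to $q>0$, which holds. We also recall from the discussion preceding the lemma that $0<a<R_0$.

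\emph{Step 3: apply the implicit function theorem.} Since $G\in C^\infty$ on a neighborhood of $A$ and $A\in\interior(Q_+)$ with $G(A)=0$, Lemma \ref{dG}(i) gives
\[
\frac{\partial G}{\partial c}(A)<0 \quad\text{and}\quad \frac{\partial G}{\partial R}(A)<0 .
\]
The implicit function theorem therefore yields a neighborhood $U$ of $A$, an open interval $I\ni a$, and a smooth function $\widetilde\varphi:I\to\mathbb{R}$ with $\widetilde\varphi(a)=b$, such that for every $R\in I$ the point $(R,\widetilde\varphi(R))$ lies in $U$ and satisfies $G(R,\widetilde\varphi(R))=0$.

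Since $\interior(Q_+)$ is open and $a\in(0,R_0)$, we may shrink $U$ and $I$ so that $U\subset\interior(Q_+)$ and $I\subset(0,R_0)$. This shrinking is the only point that needs care; it is what allows Lemma \ref{dG}(i) to be invoked at every point of the curve, not only at $A$.

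\emph{Step 4: monotonicity.} For every $R\in I$, the point $(R,\widetilde\varphi(R))$ lies in $\interior(Q_+)$ and satisfies $G=0$. Lemma \ref{dG}(i) then gives both partial derivatives of $G$ negative there, so
\[
\widetilde\varphi'(R)=-\frac{\partial_R G}{\partial_c G}\big(R,\widetilde\varphi(R)\big)<0 .
\]
Hence $\widetilde\varphi$ is strictly decreasing on $I$. No genuine obstacle is expected; the argument is a direct verification.
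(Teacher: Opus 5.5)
Your proposal is correct and follows the paper's proof. Both apply the implicit function theorem at $A$, using $\partial_c G(A)<0$ from Lemma \ref{dG}(i), and then obtain $\widetilde\varphi'<0$ from the same lemma. Your explicit checks that $G(a,b)=0$ and $A\in\interior(Q_+)$, and your shrinking of $U$ into $\interior(Q_+)$ so that Lemma \ref{dG}(i) applies along the whole branch, fill in details the paper leaves implicit.
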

    \begin{proof}
    As $G(a,b)=0$, $\frac{\partial G}{\partial c}(a,b)<0$ by Lemma \ref{dG}. Moreover, $G(R,c)$, $\frac{\partial G}{\partial R}(R,c)$ and $\frac{\partial G}{\partial c}(R,c)$ are smooth in $Q_{+}$. Thanks to the implicit function theorem, $G(R,c)=0$ uniquely determines a $C^{\infty}$ function $c=\widetilde{\varphi}(R)$ on an open interval $I$. Indeed, the function $\widetilde{\varphi}(R)$ is such that $b=\widetilde{\varphi}(a)$, and $G(R,\widetilde{\varphi}(R)) = 0$ in a neighborhood $U$ of $A$. By Lemma \ref{dG}, it holds that
    $$\frac{d\widetilde{\varphi}}{dR}(R)=-\frac{\partial G}{\partial R}(R,\widetilde{\varphi}(R))\left(\frac{\partial G}{\partial c}(R,\widetilde{\varphi}(R))\right)^{-1}<0.$$
    \end{proof}
    In the following proposition, we  prove the global existence of the curve defined by $G(R,c)=0$ on $Q_{+}$ by extending the local \textcolor{black}{branches}.
    \begin{proposition}\label{varphi}
    The level set $\{(R,c) \in Q_{+} \,|\, G(R,c)=0\}$ is the graph of a decreasing, $C^{\infty}$ function $\varphi(R)$
    $$\varphi:\left(0,R_{0}\right]\rightarrow \left [c_{0},+\infty\right).$$
   \textcolor{black}{Moreover}, the curve $\varphi(R)$ has  $B_0=(R_{0},c_{0})$ as endpoint at the right \textcolor{black}{such that $\varphi'(R_0^-)=0$,} and the axis $R=0$ as an asymptote at the left (see Figure \ref{Fig_c0new}).
    \end{proposition}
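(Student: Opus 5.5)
I will extend the local branch $\widetilde{\varphi}$ of Lemma \ref{IFT} by a continuation argument, in both directions, until it is stopped either by the boundary of $Q_+$ or by blow-up.

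\textbf{Continuation.} Define $\varphi$ as the maximal $C^\infty$ solution of $G(R,\varphi(R))=0$ with $(R,\varphi(R))\in\interior(Q_+)$ and $\varphi(a)=b$. The candidate maximal interval is $(R_-,R_+)$. On this interval Lemma \ref{dG}(i) gives $\partial_c G<0$ and $\partial_R G<0$. Hence the implicit function theorem applies at every point, and
\[
\varphi'(R)=-\frac{\partial_R G}{\partial_c G}<0 .
\]
So $\varphi$ is decreasing.

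\textbf{Left end.} As $R\downarrow R_-$, $\varphi$ increases. If $R_->0$ and $\varphi$ stayed bounded, the limit point would satisfy $G=0$ and $c>qR/\theta_{hl}$, because $\varphi$ increases while the line $(\Delta)$ decreases leftwards. We could then continue, a contradiction. If instead $R_->0$ and $\varphi\to+\infty$, we need to rule this out. I would use \eqref{m}: for $R$ bounded away from $0$ and $m=cR\to\infty$, the left side $R^2qe^m$ is negligible against the right side $m^2\theta_{ig}e^m$. So $G\neq 0$ for large $c$, which is impossible. Therefore $R_-=0$, and Lemma \ref{(0,+)} gives $\varphi\to+\infty$, which is the vertical asymptote.

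\textbf{Right end.} As $R\uparrow R_+$, $\varphi$ decreases and stays positive, so it has a limit $c_*\ge 0$. If $c_*>0$, the limit point satisfies $G=0$. It cannot lie in the interior, by the continuation argument again, so it lies on $(\Delta)$. A point of $(\Delta)$ with $G=0$ corresponds to $\theta(R)=\theta_{hl}$ and $\theta_x(R)=c\theta_{hl}-qR=0$. By Lemma \ref{cR} (case (iii)), the only such point with $R>0$ is $B_0$. To see that $c_*=0$ cannot occur, note that the $c\to0$ behaviour of $G$ gives $G\approx q(cR)^2/2+c^2\theta_{hl}-c^2\theta_{ig}>0$ for $c$ small. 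With $R$ bounded this forbids zeros; I must also check that $R_+<\infty$, and the line constraint $c\ge qR/\theta_{hl}$ with $c$ decreasing gives exactly that. Hence $R_+=R_0$ and $\varphi(R_0)=c_0$.

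\textbf{Uniqueness and the endpoint.} To show the level set in $Q_+$ is exactly this graph, suppose there were another zero $(R_1,c_1)$. If it lies on $(\Delta)$, it is $B_0$. Otherwise, by the same analysis its branch is a decreasing graph over $(0,R_0)$ ending at $B_0$. Two branches meeting at $B_0$, or one branch meeting another, would contradict uniqueness in the implicit function theorem. Alternatively, for fixed $R$ I would show that $c\mapsto G(R,c)$ has at most one zero in $Q_+$, using $\partial_cG<0$ at every zero: a function whose derivative is negative at all of its zeros has at most one zero on an interval. This second argument is cleaner, and I would adopt it.

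For $\varphi'(R_0^-)=0$: at $B_0$ we have $c_0\theta_{hl}=qR_0$, so $\partial_RG=c^2(qR-c\theta_{hl})=0$. Meanwhile
\[
\partial_cG=-\frac{2q}{c}\,(e^{cR}-1-cR)<0 ,
\]
so the quotient tends to $0$. \textbf{The main obstacle} is the right-end analysis: ruling out escape to the boundary anywhere other than $B_0$, and handling the degenerate limits.
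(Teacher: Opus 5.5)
Your proof is correct and is essentially the paper's continuation argument: the implicit function theorem at $A$, monotonicity from Lemma \ref{dG}(i), the left end via Lemma \ref{(0,+)}, the right end at $B_0$, and $\varphi'(R_0^-)=0$ because $\partial_RG=c^2(qR-c\theta_{hl})$ vanishes at $B_0$ while $\partial_cG(B_0)<0$. Where you differ, you are more complete than the paper: you locate the right endpoint as the only zero of $G$ on $(\Delta)$ rather than through the paper's case analysis with Lemma \ref{dG}(ii), and you explicitly exclude blow-up at some $R_->0$ and extra zeros in $Q_+$, which the paper leaves implicit. One caution: per-line uniqueness in $c$ does not by itself exclude zeros with $R>R_0$, so keep alongside it your step that every maximal interior branch ends at $B_0$ and hence lies over $R<R_0$.
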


    \begin{figure}[h]
    	\centering \includegraphics[width=4in, trim=0mm 10mm 0mm 0mm, clip]{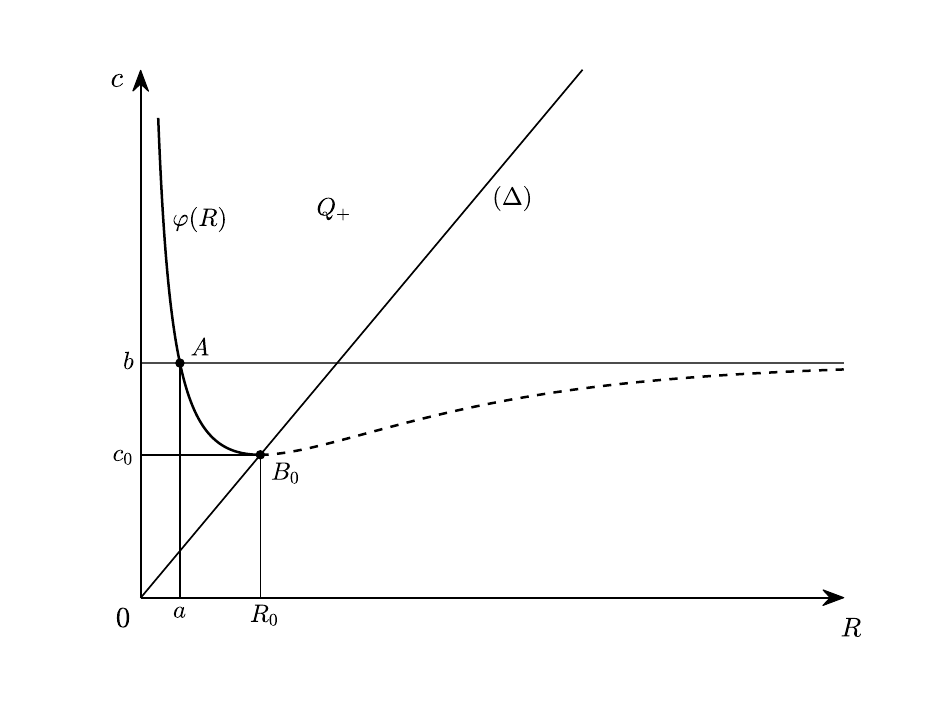}
    	\caption {Profile of the implicit function $\varphi(R)$ \textcolor{black}{which intersects the line $(\Delta)$ at} $B_0=(R_0,c_0)$ \textcolor{black}{with a horizontal tangent.} Here, $q = 1,\, \theta_{ig} = 0.2,\,\mbox{and}\; \theta_{hl} = 0.25.$}
    	\label{Fig_c0new}
    \end{figure}

    \begin{proof}
    Since $G(R,c)$ and $\frac{\partial G}{\partial c}(R,c)$ are continuous on $Q_{+}$, for any $(c,R)\in \textcolor{black}{\interior(Q_{+})}$ satisfying $G(R,c)=0,$ we have
    $$\frac{\partial G}{\partial c}(R,c)<0.$$ We can reiterate the implicit function theorem and extend the local branch in Lemma \ref{IFT}.
    The proof is divided in two parts about the right and the left extensions, respectively.

    (i) {\it Right extension:} assume the endpoint on the right is a point $(R_{r},c_{r})$ and $G(R_r,c_r)=0$ by continuity. The maximum right extension $\widetilde{\varphi}_{r}(R)$ is a $C^{\infty}$ decreasing function
    $$\widetilde{\varphi}_{r}:\left[a,R_r\right)\rightarrow \left(c_r,b\right].$$
    Let us prove that $(R_r,c_r)=(R_{0},c_{0})$.

    If $0<R_r<R_{0}$, $c_r>c_{0}$, since $G(R_r,c_r)=0$, we are in position to use the implicit function theorem again at $(R_r,c_r)$, which contradicts with the maximality of the branch $\widetilde{\varphi}_{r}$.

    If $0<R_r<R_{0}$, $c_r=c_{0}$, then $G(R_r,c_{0})=0$. But, from Lemma \ref{cR}, we have that for $c=c_{0}$, $R_{0}$ is such that $G(R_{0},c_{0})=0$ is unique. Hence, this case is ruled out.

    If $R_r=R_{0}$, $c_r>c_{0}$, then $G(R_{0},c_r)=0$. \textcolor{black}{The proof of Lemma \ref{cR}} shows that $c_{0}\theta_{hl}=qR_{0}$ and  $c_{0}> \tilde{c}$, thus
    \begin{align*}
     cR_{0}>c_{0}R_{0}=\frac{c_{0}^2\theta_{hl}}{q}>\frac{\tilde{c}^{\:2}\theta_{hl}}{q}=\frac{\theta_{hl}-\theta_{ig}}{\theta_{ig}}\,\,\, \text{for}\,\,\, c>c_{0}.
     \end{align*}
      From Lemma \ref{dG}, for fixed $R_{0}>0$, it holds that $\frac{\partial G}{\partial c}(R_{0}, c)<0$ for all $c>c_{0}$. This contradicts with $G(R_{0},c_{0})=0$. Therefore, we proved that $(R_r,c_r)=(R_{0},c_{0})$ \textcolor{black}{and the maximum right extension  $\widetilde{\varphi}_r$}
      $$\widetilde{\varphi}_r:\left[a,R_0\right)\rightarrow \left(c_0,b\right]$$
    may be \textcolor{black}{smoothly} extended to $[a,R_0]$ by continuity and intersects the line $(\Delta)$ \textcolor{black}{at $B_0=(R_0,c_0)$ because $c_0\theta_{hl}=qR_0$. Finally, we observe that the left derivative of $\widetilde{\varphi}_r$ is given by the formula
      $$\frac{d\widetilde{\varphi}_r}{dR}(R_0^-)=-\frac{\partial G}{\partial R}(R_0,\widetilde{\varphi}_r(R_0))\left(\frac{\partial G}{\partial c}(R_0,\widetilde{\varphi}_r(R_0))\right)^{-1}=0.$$
      (see the proof of Lemma \ref{dG} (i)). Therefore, the curve $\widetilde{\varphi}_r$ intersects the line $(\Delta)$ at $B_0$ with a horizontal tangent, see Figure \ref{Fig_c0new}}.\par

    (ii) {\it Left extension:} We prove that for the maximum left extension $\widetilde{\varphi}_{\ell}(R)$,
    \begin{align}\label{c0l}
    \lim_{R\rightarrow 0^{+}}\widetilde{\varphi}_{\ell}(R)=+\infty,
    \end{align}
    which implies that the axis $R=0$ is an asymptote of the curve defined by $G(R,c)=0$ in $Q_{+}$.\par
    Let us first show that $\widetilde{\varphi}_{\ell}(R)$ is well-defined on $(0,a)$. We argue by contradiction and assume that there is an endpoint $(R_{\ell},c_{\ell})$ on the left. Hence, $G(R_{\ell},c_{\ell})=0$ by continuity. If $0<R_{\ell}<a$, we may use the implicit function theorem again, which contradicts with the maximality of $\widetilde{\varphi}_{\ell}(R)$. If $R_{\ell}=0$, then $G(0,c_{\ell})=0$, which is obviously excluded by Lemma \ref{(0,+)}.\par
    According to Lemma \ref{IFT}, $\widetilde{\varphi}_{\ell}(R)$ is a decreasing function on $(0,a)$. Let us assume that (\ref{c0l}) does not hold. Thus, one has a sequence $\{R_{k}\}$ verifying $R_{k}\neq 0$ and $R_{k}\to 0^+$, such that
    \begin{align}
    \widetilde{\varphi}_{\ell}(R_{k})\leqslant M,
    \end{align}
    for some constant $M>0$. Therefore, $\{\widetilde{\varphi}_{\ell}(R_{k})\}$ is bounded and we can extract a subsequence $\{R_{k'}\}$ such that $\{\widetilde{\varphi}_{\ell}(R_{k'})\}$ converges to some constant $C>0$ as $k' \to +\infty$. Since
    \begin{align}
    G(R_{k'},\widetilde{\varphi}_{\ell}(R_{k'}))=0,
    \end{align}
   and $\widetilde{\varphi}_{\ell}(R)$ and $G(R,c)$ are continuous functions, letting $R_{k'} \to 0^+$, we get
    \begin{align}
    G(0,C)=0.
    \end{align}
    This contradicts with Lemma \ref{(0,+)}.

    Finally, the local branch in Lemma \ref{IFT} is extended to the whole interval $\left(0,R_0\right]$, and we denote it by $\varphi(R)$.
    \end{proof}

\section{Dynamical system and proof of Theorem \ref{theorem3}}\label{s:4}

We consider the system \eqref{eq:p12} with $F(u),$  a smooth function satisfying \eqref{FT>0} and \eqref{F'theta+<0}. Then, we can choose an open interval $W$ containing $[\theta_{hl},\theta_+],$ such that $\theta_{+}$ is the unique zero of $F$ on $W$. Let us write $X_c$ the vector field defined by \eqref{eq:p12}. We look at $X_c$ on the strip $W\times \R^+$ (here $\R^+=[0,+\infty)$). Let us notice that $F(u)>0$ on $W\cap \{u<\theta_+\}.$
\vskip10pt
We have that
$$X_c=X_0+cv\frac{\partial}{\partial v},$$ where $X_0$ is the Hamiltonian vector field corresponding to the Hamiltonian function $$\mathcal{H}(u,v)=\frac{1}{2}v^2+ \mathcal{U}(u).$$ Here,
$$\mathcal{U}(u)=\int_{\theta_{hl}}^uF(s)ds.$$

On  $W\times \R^+,$ the vector field $X_c$ has a unique singular point $p_0=(\theta_+,0).$
This singular point is a hyperbolic saddle point with eigenvalues
$$\lambda_\pm= \frac{c\pm \sqrt{c^2-4F'(\theta_+})}{2}.$$
\begin{figure}[h]
	\centering \includegraphics[width=4.5in]{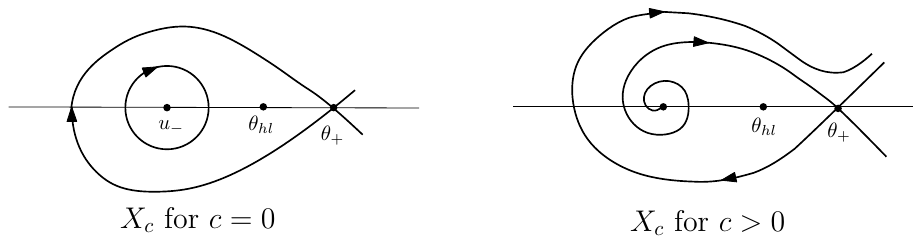}
	\caption{Phase portraits of $X_{0}$ and $X_{c}$,  $0<c<2\sqrt{F'(u_-)}$.}
	\label{FigXc}
\end{figure}

In the special case \eqref{Fu}, the vector $X_{c}$ is defined on the entire plane $\R \times \R$. The function $F(u)$ has two roots on $\R$:
\begin{equation}
	u_{\pm} =-1 \pm \Big(\frac{q+h}{h}\Big)^{\frac{1}{4}},
\end{equation}
which are simple. One is $u_{+}=\theta_{+}>0$ (see \eqref{eq:i8}), but $u_{-}$ is negative, with no physical meaning. $X_{c}$ has two singular points, a saddle $(\theta_{+},0)$, \textcolor{black}{and a singular point $(u_{-},0)$, where $F'(u_{-})>0$; this point is a center if $c=0$, an unstable focus if $0<c<2\sqrt{F'(u_{-})}$,  and an unstable node if $c>2\sqrt{F'(u_{-})}$. }

Drawing the phase portrait of $X_{0}$ is trivial, knowing that the trajectories are contained in the level curves of $\mathcal{H}(u,v)$. For $c>0$, the phase portrait can be easily obtained, with the help of Poincar\'e-Bendixson theorem (see \cite{[PM]}). These phase portraits are pictured in Figure \ref{FigXc}, where the one on the right shows the case when $0<c<2\sqrt{F'(u_{-})}$.

\subsection{\bf Existence of the function $v_c(u)$}\label{4.1}

The saddle point $p_0$ has a stable and an unstable \textcolor{black}{manifold} of dimension one.  We are interested in the half part of the stable manifold located in the region $\mathcal{W}=
[\theta_{hl},\theta_+]\times \R^+ \subset W\times \R^+.$  We \textcolor{black}{refer to} this half-stable manifold \textcolor{black}{as} the stable \textcolor{black}{separatrix}  $S_c$ (of $p_0$). Note that $S_c$  is the union of an orbit $\gamma_c$ of $X_c$ tending toward $p_0$ and this limit point.

In our particular case, the stable manifold  theorem  (see \cite{[PM],Irwin})
says that $S_c$ is a $c$-family of smooth  curves depending smoothly on $c$ (see, e.g., \cite[Theorem 6.2, p. 75]{[PM]}).
Let us make this claim more precise.
\vskip2pt
a. First, for $c=0$,  the stable \textcolor{black}{separatrix} $S_0$ is contained in the level $$\mathcal{H}(u,v)=\mathcal{H}(\theta_+,0)=\mathcal{U}(\theta_+).$$ As a consequence,  $S_0$ is the graph:
$$\{v=v_0(u)=\sqrt{2(\mathcal{U}(\theta_+)-\mathcal{U}(u))}\},$$  for $u\in [\theta_{hl},\theta_+].$
\vskip2pt
b. For $c>0$, let us consider now the topological closed disk: $$\mathcal{B}=\{(u,v) | \theta_{hl}\leqslant u \leqslant \theta_+ , 0\leqslant v\leqslant v_0(u)\},$$
whose boundary $\partial\mathcal{B}$ is the topological triangle $[p_0,p_1,p_2]$  of corners $p_0,p_1$, and $p_2$ (see Figure
\ref{Figgamma}).
\begin{figure}[h]
	\centering \includegraphics[width=4.4in]{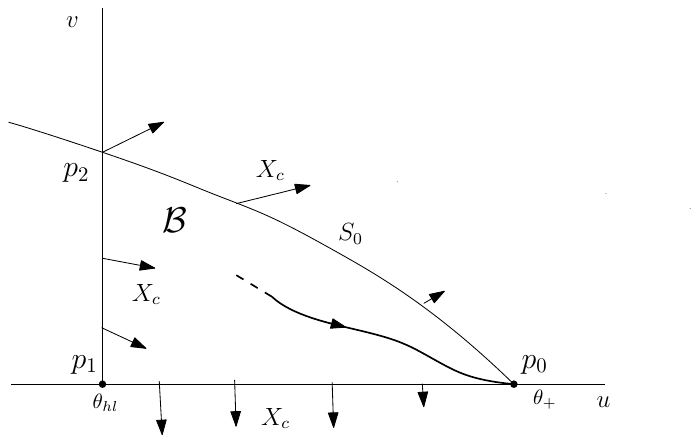}
	\caption{The vector field $X_c$ in $\mathcal{B}$.}
	\label{Figgamma}
\end{figure}

The vector field $X_c$ points inside $\mathcal{B}$ along the left vertical side $[p_1,p_2]$  of $\partial\mathcal{B}$ and points outside  $\mathcal{B}$ along the two other sides  of the boundary: The horizontal side $[p_1,p_0]$
contained in $\{v=0\}$  and the curved side $[p_2,p_0]$ contained in
$S_0.$ Let us consider now the orbit $\gamma_c.$ It is the image of a trajectory $\gamma_c(t)$ with an intial point $p=\gamma_c(0)$ in the interior of
$\mathcal{B}.$ We know that $\gamma_c(t)\rightarrow p_0$ as $t\rightarrow
+\infty.$  For the negative $t$, we have the following alternative, easy consequence of what is said above  the $X_c$ along $\partial\mathcal{B}$:

{\it There exists a  maximal $t_1<0$ such that $\gamma_c(t_1)\in [p_1,p_2]$ or the $\gamma_c(t)$ is defined for all negative $t$ and the $\alpha$-limit set of $\gamma_c$ (the set of adherent points for $t\rightarrow -\infty$) is a non-empty compact set $L,$  contained in the interior of $\mathcal{B}.$}

If the second alternative holds, then we could apply the Poincar\'e-Bendixson theorem. A general version of this theorem may be found in \cite{[PM]}; however, in the present article, we use a weak version given in detail in \cite{[R]}. The consequence is that the set $L$ must contain a singular point or a closed orbit of $X_c$. In the latter case, since $\mathcal{B}$ is simply connected, there also exists a  singular point in the interior of $\mathcal{B}$. Then, in any case, the interior of $\mathcal{B}$ must contain a singular point, which is  impossible.
\begin{remark}
The idea of applying the Poincar\'e-Bendixson theorem in order to study a particular orbit was already used  in \cite{BRSZ}.
The problem under consideration herein is simpler: We needed to determine the trajectory in \cite{BRSZ}, whereas we already know which trajectory to take into account in the present work,
namely $\gamma_c(t)$. 
We just want to prove that $\gamma_c(t)$ reaches the side $[p_1,p_2]$ on $\partial\mathcal{B}$ \textcolor{black}{at some} negative time.
\end{remark}

Then, the first term of the alternative is true:  the trajectory $\gamma_c(t)$ starts at a point $(\theta_{hl},v_c(\theta_{hl}))$ with $$0<v_c(\theta_{hl})<v_0(\theta_{hl}).$$ But, as $\gamma_c$ is located in $\{v>0\}$,
$S_c$ must be a graph $v=v_c(u)$ above $u\in[\theta_{hl},\theta_+].$ The trajectory $\gamma_{c}(t)$ may be written as
\begin{equation}\label{gamma_c}
\gamma_c(t)=(u_{\gamma}(t,c),v_{\gamma}(t,c))
\end{equation}
in the coordinates $(u,v)$. Making a time translation, we assume now that
\begin{align*}
\gamma_{c}(0)=(\theta_{hl},v_{c}(\theta_{hl}))\quad \text{and} \quad \gamma_c(t) \to (\theta_{+},0)\,\, \text{as} \,\, t \to +\infty.
\end{align*}
 The function $v_{c}(u)$ is obtained by eliminating the time $t$ between $v_{\gamma}(t,c)$ and $u_{\gamma}(t,c)$.

If we return to the original notations (see \eqref{notations}), we have to make the time change $x=R+t$ and the $u$-component $u_{\gamma}(t,c)$ \textcolor{black}{gives a $\theta$-solution as $\theta_{R,c}(x)= u_{\gamma}(x-R,c)$ of the following equation:
\begin{equation}\label{*}
 \left\{
 \begin{array}{ll}
 \theta_{xx}  -c \theta_x+F(\theta)=0, \:\:\: x\in[R,+\infty),\\
 \theta(R)=\theta_{hl},\:\:\: \theta(x) \to \theta_+ \:\:\text{as}\:\: x\to + \infty.
 \end{array}
 \right.
 \end{equation}
Note that \eqref{*} is just (22), without the condition $\theta_{x}(R)=c\theta_{hl}-qR$; the latter condition will be treated in Section \ref{4.3}.} We can formulate this as follows.

\begin{proposition}\label{theta_R_c}
For each pair $(R,c) \in (\R^{+})^2$, there exists a unique solution $\theta_{R,c}(x)$ of \textcolor{black}{\eqref{*}} such that
\begin{align*}
\theta_{R,c}(R)=\theta_{hl}\quad \text{and} \quad \theta_{R,c} \to \theta_{+}\,\, \text{as} \,\, x\to +\infty.
\end{align*}
 Moreover, it holds that $\frac{d}{dx} \theta_{R,c}>0$ for $x\in [R,+\infty)$. The solution is unique according to the Cauchy theorem.

\end{proposition}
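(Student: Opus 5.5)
The plan is to read the result off the analysis of the stable separatrix $S_c$ carried out just above. Fix $(R,c)$ with $c>0$ (the case $c=0$ is covered by part a., where $S_0$ is the explicit graph $v=v_0(u)$). By the Poincar\'e--Bendixson argument in $\mathcal{B}$, the orbit $\gamma_c$ of $X_c$ converging to $p_0$ enters $\mathcal{B}$ through the side $[p_1,p_2]$. It does so at the point $(\theta_{hl},v_c(\theta_{hl}))$ with $0<v_c(\theta_{hl})<v_0(\theta_{hl})$. After a time translation, the trajectory $\gamma_c(t)=(u_\gamma(t,c),v_\gamma(t,c))$ satisfies $\gamma_c(0)=(\theta_{hl},v_c(\theta_{hl}))$ and $\gamma_c(t)\to p_0$ as $t\to+\infty$. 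Setting $\theta_{R,c}(x)=u_\gamma(x-R,c)$ and using the autonomy of \eqref{eq:p12} gives existence. Indeed, $\theta_{R,c}$ solves $\theta_{xx}-c\theta_x+F(\theta)=0$ on $[R,+\infty)$, with $\theta_{R,c}(R)=\theta_{hl}$ and $\theta_{R,c}\to\theta_+$.

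For monotonicity, recall that $\gamma_c$ for $t\ge0$ stays in $\mathcal{B}\setminus\{p_0\}$. It cannot leave through $[p_1,p_2]$, where the field points inward, so it would have to exit through one of the other sides, but it must converge to $p_0$. I will check that $v_\gamma(t,c)>0$ for all $t\geqslant0$. On the side $\{v=0,\ \theta_{hl}\le u<\theta_+\}$ we have $v'=-F(u)<0$, so an orbit reaching $v=0$ would leave $\mathcal{B}$ through the lower side and enter $\{v<0\}$. Once there, $u$ decreases while $u<\theta_+$, and it could never return to $p_0$. In fact the paper has already established that $S_c$ is a graph $v=v_c(u)>0$ over $[\theta_{hl},\theta_+)$. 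Hence $\frac{d}{dx}\theta_{R,c}(x)=v_\gamma(x-R,c)>0$ on $[R,+\infty)$.

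Uniqueness is the main point to argue carefully. Take any solution $\theta$ of \eqref{*} that tends to $\theta_+$. Its phase curve $(\theta,\theta_x)$ is an orbit of $X_c$, and I must show that this orbit converges to $p_0$. The step I expect to be the main obstacle is showing that $\theta\to\theta_+$ forces $\theta_x\to0$, so that the $\omega$-limit set is $p_0$. I would handle it as follows. If $\theta_x$ did not tend to $0$, then the equation gives $\theta_{xx}=c\theta_x-F(\theta)$ with $F(\theta)\to0$, which makes $\theta_x$ grow exponentially whenever it stays away from $0$; this contradicts the boundedness of $\theta$. Bounding $\theta_x$ first and then applying a Barbalat-type argument will make this rigorous.

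Once the orbit converges to $p_0$, it lies in the stable manifold of the hyperbolic saddle, by the stable manifold theorem. That manifold consists of $p_0$ and two branches. The branch through $u=\theta_{hl}<\theta_+$ is the one located in $\{v\ge0\}$. Because the other branch lies in $u>\theta_+$ (or in $v<0$ and never reaches $u=\theta_{hl}$ in the relevant region), the orbit is $\gamma_c$. Since $S_c$ is a graph over $u$, the point $(\theta_{hl},\theta_x(R))$ is forced to equal $(\theta_{hl},v_c(\theta_{hl}))$. The Cauchy--Lipschitz theorem then gives $\theta=\theta_{R,c}$.
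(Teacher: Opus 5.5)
Your existence and monotonicity arguments are the paper's. You set $\theta_{R,c}(x)=u_\gamma(x-R,c)$, and $\frac{d}{dx}\theta_{R,c}=v_\gamma(x-R,c)>0$ because $\gamma_c(t)$, $t\geqslant0$, lies in $\{v>0\}$. For uniqueness the paper only appeals to the Cauchy theorem, taking for granted that $\theta_x(R)$ must equal $v_c(\theta_{hl})$. Your plan to justify this goes beyond the paper: show $\theta_x\to0$, place the phase orbit on the stable manifold of $p_0$, select the branch, then apply Cauchy--Lipschitz. One small point: for $c=0$ the exponential-growth heuristic does not apply. Use conservation of $\mathcal{H}$ instead, or Barbalat directly, since $\theta_{xx}=-F(\theta)$ is bounded.

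The final identification step, however, has a genuine gap. Knowing that the orbit of $\theta$ lies in the full orbit $\gamma_c$ does not place $(\theta_{hl},\theta_x(R))$ on $S_c$. The graph property only concerns $\gamma_c(t)$ for $t\geqslant 0$, and the backward continuation of $\gamma_c$ can meet $\{u=\theta_{hl}\}$ again. This actually happens for $F(u)=q-h((1+u)^4-1)$ taken on all of $\R$. For $c=0$, $\gamma_0$ is the homoclinic loop around $(u_-,0)$, which also passes through $(\theta_{hl},-v_0(\theta_{hl}))$. The solution issued from that point satisfies $\theta(R)=\theta_{hl}$ and $\theta\to\theta_+$, yet differs from $\theta_{R,0}$; by continuity the same happens for small $c>0$. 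The same winding mechanism is why your side remark, that an orbit entering $\{v<0\}$ ``could never return to $p_0$'', also requires confinement.

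So you must state and use that $\theta$ takes values in $W$, the domain of $F$ (physically, $\theta\geqslant\theta_{hl}$ on $[R,+\infty)$). With this assumption, write $(\theta(R),\theta_x(R))=\gamma_c(\tau)$ and rule out $\tau\neq0$:
\begin{itemize}
\item $\tau>0$ is impossible, since $u_\gamma>\theta_{hl}$ on $(0,+\infty)$.
\item Suppose $\tau<0$ and $v_\gamma$ vanished on $[\tau,0)$; let $t_*$ be its largest zero. Then $u_\gamma(t_*)<\theta_{hl}$ and $u_\gamma(t_*)\in W$, so $F(u_\gamma(t_*))>0$ and $v_\gamma'(t_*)=-F(u_\gamma(t_*))<0$. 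This contradicts $v_\gamma>0$ on $(t_*,0]$.
\item Hence $u_\gamma$ is increasing on $[\tau,0]$, giving $\theta(R)=u_\gamma(\tau)<\theta_{hl}$, a contradiction.
\end{itemize}

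Your claim that the other branch lies in $\{u>\theta_+\}$ also needs its short proof. Since $F<0$ on $W\cap\{u>\theta_+\}$, the set $\{u>\theta_+,\ v<0\}$ is invariant under the backward flow, along which $u$ increases. So that branch never reaches $u=\theta_{hl}$.
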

\begin{proof}
As mentioned above, we have $\theta_{R,c}(x)= u_{\gamma}(x-R,c)$. Then, it follows from  \eqref{notations}-\eqref{eq:p12} that
\begin{equation}
	\frac{d}{dx} \theta_{R,c}=v_{\gamma}(x-R,c)=\frac{du_{\gamma}}{dt}(x-R,c).
\end{equation}
As the orbit $\gamma_{c}$ is contained in $\{v>0\}$, we have that $v_{\gamma}(x-R,c)>0$ for $x\in [R,+\infty)$.
\end{proof}
The graph of the function $v_c(u)$, defined above, is a compact subinterval on the stable manifold of the saddle point $p_0=(\theta_+,0)$ of the vector field $X_c$. Then, the stable manifold theorem implies that $v_c(u)$ is a smooth function of $(u,c)$ on
$[\theta_{hl},\theta_+]\times \R^+$.

The \textcolor{black}{separatrix} property of $S_c$ is expressed by the fact that $v_c(\theta_+)=0$ for all $c$ (but, of course, $v_c(u)>0$ for $u<\theta_+).$

\begin{remark}(i) In \cite{[PM]}, the stable manifold theorem is only proved for a hyperbolic saddle point $p$ of a diffeomorphism. But, if $p$ is a hyperbolic saddle of a vector field $X,$ $p$ is also a hyperbolic saddle of $\varphi_t$  for any $t\not =0,$ where $\varphi_t$ is a one-parameter group of $X$. Consider now the stable manifold $\mathcal{V}$ of $\varphi_1.$
	As $\varphi_t$ commutes with $\varphi_1$ for any $t,$   we have that $\varphi_t(\mathcal{V})$ is also a stable manifold of $\varphi_1.$ As $\mathcal{V}$ is the unique stable manifold of $\varphi_1$
	at  $p$, we have that
	$\varphi_t(\mathcal{V})=\mathcal{V}$ for all $t.$ This is equivalent to say that $X$ is tangent to $\mathcal{V}$, and thus is the stable manifold of $X$ at $p.$
	
	(ii) \cite[Theorem 6.2]{[PM]} is only proved for a small compact disk, neighborhood of $p$ on the stable manifold $\mathcal{V}_c.$  But, it can be easily extended to any  compact disk, neighborhood of $p$ on $\mathcal{V}_c$ (for  such a disk $\mathcal{V}_c$ containing the \textcolor{black}{separatrix} $S_c$ in our case; we can use the Cauchy theorem at regular points of $X_c$ in order  to trivially obtain an extension to the whole \textcolor{black}{separatrix} $S_c$).
\end{remark}

\subsection{\bf Computation of  $\partial_cv(u,c) $}

In this subsection, we choose  to write
\begin{align*}
v_c(u)=v(u,c), \quad \frac{\partial v_c}{\partial c}(u)=\partial_cv(u,c),\quad  \frac{\partial X_c}{\partial c}=\partial_cX_c.
\end{align*}
As
\begin{align*}
{\rm Det}(X_c,\partial_cX_c)=v^2>0\,\,\, \text{for} \,\,\,v>0,
\end{align*}
 the family of vector fields $X_c$ has the {\it rotation property} (see \cite{Duff}) in function of $c,$   for $v>0$: At each point
$(u,v)\in \mathcal{W}$ with $v>0,$ the vector $X_c(u,v)$ rotates at a non-zero speed in the direct sense when $c$ increases.  An easy consequence is that each segment of orbit starting at a regular point is also rotating (see \cite[Lemma 3.6]{[BLR]} for a trivial proof and also a precise definition of rotated vector fields).

The evolution with respect to a parameter of a separatrix starting at a saddle point was also largely studied,
in particular because  this question is of great interest  for the bifurcation theory  of vector fields (see, e.g., \cite{[A],Soto,[M]}).

Consider, as above, a family of smooth vector fields  $X_c$  with a family $\gamma_c$ of orbits \textcolor{black}{corresponding}  to  stable \textcolor{black}{separatrix} at a fixed saddle point $p_0,$ with $c\sim \bar{c}$. We choose a  section $\sigma$  with coordinate $r,$  transverse to the orbits  $\gamma_c$,
and consider the smooth function $D(c)$ which is the $r$-coordinate on $\sigma$ for the intersection point of $\gamma_c$ with $\sigma$, with $\bar{r}=D(\bar{c})$.  A classical formula asserts that the derivative of $D(c)$  at $\bar{c}$
can be computed by means of the (convergent) improper integral, called the {\it Melnikov integral:}\textcolor{black}{
$$ \frac{dD}{dc}(\bar{c})=\frac{1}{{\rm Det}(\partial_r\sigma,X_{\bar{c}}(\sigma(\bar{r}))}
\int _0^{+\infty}e^{-\int_0^t{\rm Div}(X_{\bar{c}}(\gamma_{\bar{c}}(s))ds}{\rm Det}(X_{\bar{c}},\partial_c X_{\bar{c}})(\gamma_{\bar{c}}(t)) dt.$$}

We return now to system \eqref{eq:p12}. For $\bar{u} \in [\theta_{hl},\theta_{+})$, let $t(\bar{u})$ be the time such that $u_{\gamma}(t(\bar{u},c))=\bar{u}$.   We can apply the above formula to compute the partial derivative $\partial_c v(\bar{u},\bar{c})$.
\vspace*{-1pt}
\begin{proposition}\label{melnikov}
	For any $\bar{u}\in [\theta_{hl},\theta_+)$  and $\bar{c}\in \R^+$, we have that
	$\partial_c v(\bar{u},\bar{c})<0.$ More explicitly, we have that:
	\begin{equation}\label{eq1}
		\partial_cv(\bar{u},\bar{c})=-\frac{1}{v(\bar{u},\bar{c})}\int _0^{+\infty}e^{-\bar{c}t}v^2_\gamma(t+t(\bar{u}),\bar{c})dt,
	\end{equation}
where $\gamma_{\bar{c}}(t)=(u_\gamma(t,\bar{c}),v_\gamma(t,\bar{c}))$  is defined by \eqref{gamma_c}. By choice, we have that
	$\gamma_{\bar{c}}(t)\rightarrow p_0$ for $t\rightarrow +\infty.$
\end{proposition}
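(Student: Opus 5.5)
We plan to apply the Melnikov formula recalled just above, using the vertical section $\sigma=\{u=\bar u\}$ parametrized by $r=v$, so that $D(c)=v(\bar u,c)$. With $\partial_r\sigma=(0,1)$ and $X_{\bar c}(\bar u,\bar v)=(\bar v,\bar c\bar v-F(\bar u))$, where $\bar v=v(\bar u,\bar c)>0$, we get ${\rm Det}(\partial_r\sigma,X_{\bar c})=0\cdot(\bar c\bar v-F(\bar u))-1\cdot\bar v=-\bar v$. This is where the prefactor $-1/v(\bar u,\bar c)$ comes from. It is nonzero because $\bar u<\theta_+$ and the separatrix lies in $\{v>0\}$ away from $p_0$; transversality of $\sigma$ to $\gamma_{\bar c}$ follows for the same reason.

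Next we compute the integrand. The divergence of $X_c$ is $\partial_u v+\partial_v(cv-F(u))=c$, which is constant, so $e^{-\int_0^t{\rm Div}}=e^{-\bar ct}$. Since $\partial_cX_c=v\,\partial_v$, we have ${\rm Det}(X_c,\partial_cX_c)=v\cdot v-0=v^2$. The Melnikov integral must start from the point of $\gamma_{\bar c}$ on $\sigma$, which is $\gamma_{\bar c}(t(\bar u))$. Shifting time gives the integrand $e^{-\bar c t}v_\gamma^2(t+t(\bar u),\bar c)$, and this yields \eqref{eq1}.

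The integral converges. As $t\to+\infty$, $\gamma_{\bar c}$ tends to the hyperbolic saddle along the stable direction, so $v_\gamma$ decays like $e^{\lambda_-t}$ with $\lambda_-<0$. Together with $e^{-\bar ct}$ and $\bar c\ge 0$, this makes the integrand exponentially decaying. The integrand is positive, since $v_\gamma>0$ along $\gamma_{\bar c}$, so the integral is strictly positive and $\partial_cv<0$.

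The main obstacle is justifying the Melnikov formula itself with the correct signs and normalization, in particular at $\bar c=0$ and uniformly up to $\bar u=\theta_{hl}$. If we wanted to avoid quoting it, we would derive it directly. Differentiate the relation $v(u_\gamma(t,c),c)=v_\gamma(t,c)$, or equivalently the PDE for the graph, $v\,\partial_uv=cv-F(u)$, with respect to $c$. Setting $w=\partial_cv$ gives $v w_u+v_u w=v+cw$, that is, $\frac{d}{du}(vw)=v+c\,\frac{vw}{v}$. Along the orbit, where $du=v\,dt$, this becomes $\frac{d}{dt}(vw)=v^2+c\,(vw)$. Integrate this linear ODE from $t(\bar u)$ to $+\infty$, using the boundary condition $vw\to0$ at the saddle: this holds because $v\to0$ while $w=\partial_cv$ stays bounded, by the smooth dependence of the stable manifold on $c$ noted after Proposition \ref{theta_R_c}. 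This gives $v w(\bar u)=-\int_0^{\infty}e^{-\bar ct}v_\gamma^2(t+t(\bar u))\,dt$, confirming the formula and its sign directly.
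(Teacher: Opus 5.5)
Your proposal is correct and follows the paper's proof. You use the vertical section $\{u=\bar u\}$ and obtain ${\rm Det}(\partial_v,X_{\bar c})=-v(\bar u,\bar c)$, ${\rm Div}\,X_{\bar c}=\bar c$ and ${\rm Det}(X_{\bar c},\partial_cX_{\bar c})=v_\gamma^2$, then conclude from the positivity of the integrand, exactly as the paper does. Your backup derivation is also correct: differentiate $v\,\partial_uv=cv-F(u)$ in $c$ to get $\frac{d}{dt}(vw)=v_\gamma^2+\bar c\,vw$ along the orbit, then integrate using $vw\to0$ at $p_0$. It usefully checks the sign and normalization of the quoted Melnikov formula independently, including at $\bar c=0$.
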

\begin{proof}
For system \eqref{eq:p12} and in the notations introduced above, we can choose, as section $\sigma,$ the axis $\{u=\bar{u}\},$ with coordinate $v>0$. We have to replace $\gamma_{\bar{c}}(t)$ with $\gamma_{\bar{c}}(t +t(\bar{u}))$, which is the trajectory with initial condition $\gamma_{\bar{c}}(t(\bar{u}))$ at $t=0$.
Then, $\partial_r\sigma$ is the vector $\partial_v$ and ${\rm Det}(\partial_r\sigma,X_{\bar{c}}(\sigma(\bar{r}))= -v(\bar{u},\bar{c}).$
Moreover,
\begin{align*}
{\rm Div}(X_{\bar{c}}(\gamma_{\bar{c}}(s+t(\bar{u})))=\textcolor{black}{\bar{c}}, \quad {\rm Det}(X_{\bar{c}},\partial_c X_{\bar{c}})(\gamma_{\bar{c}}(t+t(\bar{u})))=\textcolor{black}{v_\gamma^2(t+t(\bar{u}),\bar{c})}.
\end{align*}
This gives the formula (\ref{eq1}).  We have that $v(\bar{u},\bar{c})>0$ and $e^{-\bar{c}t}v^2_\gamma(t,\bar{c})>0.$ This implies that
$\partial_c v(\bar{u},\bar{c})<0.$
\end{proof}

\subsection{The function $\psi(R)$}\label{4.3}

For $R\in \R^+$, we  want to find a $c(R),$  a solution of the equation:
\begin{equation}\label{eq2}
	c\theta_{hl}-qR=v(\theta_{hl},c)
\end{equation}
In fact, we can easily solve (\ref{eq2}) in terms of \textcolor{black}{$R$:}
$$R= \frac{1}{q}(c\theta_{hl}-v(\theta_{hl},c))$$
The function $c\mapsto   \frac{1}{q}(c\theta_{hl}-v(\theta_{hl},c))$ is     defined for $c>0.$ By Proposition \ref{melnikov}, its derivative is positive for all $c>0.$ Then, this function has an inverse function $\psi(R)$, also with a positive derivative
$\psi'(R)>0$,  for all $R$ in the domain of definition of $\psi(R).$ \textcolor{black}{Clearly, the function $\psi(R)$ verifies the following proposition.
\begin{proposition}\label{theta-dirivatiove-R}
The solution $\theta_{R,c}(x)$ of \eqref{*}, found in \textcolor{black}{Proposition \ref{theta_R_c}}, verifies
\begin{align}
\frac{d}{dx}\theta_{R,c}(R)=c\theta_{hl}-qR
\end{align}
if and only if $c=\psi(R)$.
\end{proposition}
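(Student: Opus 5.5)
The plan is to unwind the definitions: the equivalence should follow from the construction of $\theta_{R,c}$ in Proposition \ref{theta_R_c} and the definition of $\psi$ as the inverse of $c\mapsto \frac1q(c\theta_{hl}-v(\theta_{hl},c))$.

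\emph{Step 1: the initial slope.} By the construction in Subsection \ref{4.1}, $\theta_{R,c}(x)=u_\gamma(x-R,c)$, where $\gamma_c$ is normalized so that $\gamma_c(0)=(\theta_{hl},v_c(\theta_{hl}))$. Differentiating in $x$ and using $u_\gamma'=v_\gamma$ from \eqref{eq:p12} gives
\begin{equation*}
\frac{d}{dx}\theta_{R,c}(R)=v_\gamma(0,c)=v(\theta_{hl},c).
\end{equation*}
The required condition $\frac{d}{dx}\theta_{R,c}(R)=c\theta_{hl}-qR$ is therefore equivalent to equation \eqref{eq2}, that is, to
\begin{equation*}
R=\Phi(c):=\frac1q\big(c\theta_{hl}-v(\theta_{hl},c)\big).
\end{equation*}

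\emph{Step 2: invertibility of $\Phi$.} The stable manifold theorem gives that $v$ is smooth in $c$. Proposition \ref{melnikov} gives $\partial_c v(\theta_{hl},c)<0$. Hence
\begin{equation*}
\Phi'(c)=\frac1q\big(\theta_{hl}-\partial_c v(\theta_{hl},c)\big)>\frac{\theta_{hl}}{q}>0,
\end{equation*}
so $\Phi$ is strictly increasing and injective on $c>0$. It is a diffeomorphism onto its image, and $\psi=\Phi^{-1}$ on that image. For $R$ in the domain of $\psi$, injectivity of $\Phi$ means that $R=\Phi(c)$ holds if and only if $c=\psi(R)$. Combining this with Step 1 proves the equivalence.

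\emph{Step 3 (the only delicate point): the domain of $\psi$.} The statement implicitly requires that, for the relevant $R$, $R$ actually lies in the range of $\Phi$; otherwise neither side of the equivalence can hold. If $R$ is outside $\Phi((0,\infty))$, the condition fails for every $c$, and ``$c=\psi(R)$'' is vacuous, so the equivalence is consistent if read on the domain of $\psi$. To identify that domain concretely, I would use the lower bound $\Phi'(c)>\theta_{hl}/q$, which gives $\Phi(c)\to+\infty$ as $c\to\infty$. I would also evaluate the limit as $c\to0^+$, namely $\Phi(0^+)=-v(\theta_{hl},0)/q=-v_0(\theta_{hl})/q<0$, using continuity of $v$ at $c=0$ from the stable manifold theorem. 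It follows that $\Phi$ maps $(0,\infty)$ onto $(-v_0(\theta_{hl})/q,\infty)$, which contains $[0,\infty)$. So $\psi$ is defined for every $R\geqslant0$ and takes a positive value $\psi(0)>0$, matching Theorem \ref{theorem3}(i).
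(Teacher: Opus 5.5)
Your proposal is correct and is essentially the paper's argument. The paper defines $\psi$ as the inverse of $c\mapsto \frac{1}{q}\big(c\theta_{hl}-v(\theta_{hl},c)\big)$, which is strictly increasing by Proposition \ref{melnikov}, and then asserts the equivalence as clear, since $\frac{d}{dx}\theta_{R,c}(R)=v(\theta_{hl},c)$. Your Step 3 reaches the paper's subsequent conclusion that $\psi$ is defined on all of $[0,+\infty)$, via slightly different bookkeeping: you use $\Phi(0^+)<0$ and $\Phi'>\theta_{hl}/q$, whereas the paper intersects the increasing line $c\theta_{hl}$ with the decreasing, bounded function $c\mapsto v(\theta_{hl},c)$.
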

}

Let us look at this domain of definition \textcolor{black}{of $\psi(R)$}.  The equation:
$$c\theta_{hl}=v(\theta_{hl},c)$$
has a unique solution which is $\psi(0)$ (the left term increases from $0$ to $+\infty$, while the right term \textcolor{black}{decreases} from $v(\theta_{hl},0)>0$ to $0).$

We have that $\psi(0)>0$ and $R\mapsto \psi(R)$ is a smooth map sending diffeomorphically the interval $[0,+\infty)$ on the interval
$[\psi(0),+\infty)$.

\textcolor{black}{Let $$M=\sup\{F(u)\,|\,u\in [\theta_{hl},\theta_+]\}>0.$$ If $V>0$ and $c>\frac{M}{V}$, the vector field $X_c$ is transverse and upward along the segment $[\theta_{hl},\theta_+]\times \{V\}$. Then, by the same argument used in Section \ref{4.1}, we have that the orbit $\gamma_c$ is contained in the strip $[\theta_{hl},\theta_+]\times (0,V)$. This means that, if $c>\frac{M}{V}$, then $$0<v(u,c)<V, \quad u\in [\theta_{hl},\theta_+).$$ As a consequence, $v(\theta_{hl},c)\to 0$ as $c\to +\infty$, and the graph of $\psi(R)$ is asymptotic to the line \textcolor{black}{($\Delta$)} when $R\to +\infty$.}

Finally, as \textcolor{black}{$$v(\theta_{hl},c(R))\leqslant v(\theta_{hl},0),$$}
we have that $$\psi(R)\leqslant \frac{1}{\theta_{hl}}(\textcolor{black}{v(\theta_{hl},0)}+qR).$$
Now, recalling  that $$\textcolor{black}{v(\theta_{hl},0)}=\Big(2\int_{\theta_{hl}}^{\textcolor{black}{\theta_+}}F(u)du\Big)^{\frac{1}{2}},$$ we obtain
$$\psi(R)\leqslant \frac{1}{\theta_{hl}}\Big(\big(2\int_{\theta_{hl}}^{\textcolor{black}{\theta_+}}F(u)du\big)^{\frac{1}{2}}+qR\Big)=c_+(R),$$ which completes the proof of Theorem \ref{theorem3}.

\section{Proof of Theorem \ref{t:1}} \label{s:5}

Since Theorem \ref{theorem3} is applicable  to the special case \eqref{eq:p4}, 
the results of Theorem \ref{theorem2} \and Theorem \ref{theorem3} can be combined.

\begin{corollary}\label{corol}
	\textcolor{black}{Let $F$ be defined by \eqref{eq:p4}.} 
There exists a unique pair $(R^{\ast},c^{\ast})$ belonging to the interior of the domain $Q_{+}$ such that
	\begin{equation}
		c^{\ast}= \varphi(R^{\ast})=\psi(R^{\ast}).
	\end{equation}
\end{corollary}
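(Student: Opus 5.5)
The plan is an intermediate value argument on the interval $(0,R_0]$, where both curves are defined, using that $\varphi$ is decreasing and $\psi$ is increasing. By Proposition \ref{varphi}, $\varphi$ is continuous and strictly decreasing on $(0,R_0]$, with $\varphi(R)\to+\infty$ as $R\to0^+$ and $\varphi(R_0)=c_0$. By Theorem \ref{theorem3}, $\psi$ is continuous and strictly increasing on $[0,+\infty)$, with $\psi(0)>0$ finite.

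Define $D(R)=\varphi(R)-\psi(R)$ on $(0,R_0]$; it is continuous and strictly decreasing. As $R\to0^+$ we get $D(R)\to+\infty$, since $\varphi$ blows up while $\psi$ stays bounded near $0$.

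At $R=R_0$ we use Theorem \ref{theorem3}(iii): $\psi(R_0)>qR_0/\theta_{hl}$. By \eqref{relation_C0R0}, $qR_0/\theta_{hl}=c_0=\varphi(R_0)$. Hence $D(R_0)<0$.

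By the intermediate value theorem there is $R^\ast\in(0,R_0)$ with $D(R^\ast)=0$. Strict monotonicity of $D$ makes $R^\ast$ unique in $(0,R_0]$. Set $c^\ast=\varphi(R^\ast)=\psi(R^\ast)$.

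\textbf{Interior of $Q_+$.} Theorem \ref{theorem3}(iii) gives $c^\ast=\psi(R^\ast)>qR^\ast/\theta_{hl}$, and $R^\ast>0$, so $(R^\ast,c^\ast)$ lies in the interior of $Q_+$.

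\textbf{Uniqueness among all intersections.} Any $(R,c)$ with $c=\varphi(R)=\psi(R)$ must have $R$ in the domain of $\varphi$. By Theorem \ref{theorem2}(i), that domain is $(0,R_0]$, because the zero set of $G$ in $Q_+$ is exactly the graph of $\varphi$. So no intersection occurs outside the interval already treated, and uniqueness follows.

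The main delicate point is the sign of $D$ at the endpoint $R_0$. It depends on the strict inequality in Theorem \ref{theorem3}(iii) together with the fact that $B_0$ lies exactly on $(\Delta)$. The rest is routine.
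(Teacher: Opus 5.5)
Your proposal is correct and matches the paper's proof: an intermediate value argument for $\psi-\varphi$ (your $D=\varphi-\psi$) on $(0,R_0]$, using $\varphi\to+\infty$ at $0^+$, $\varphi(R_0)=c_0=qR_0/\theta_{hl}<\psi(R_0)$, and strict monotonicity for uniqueness. You obtain $\psi(R_0)>qR_0/\theta_{hl}$ from the strict lower bound in Theorem \ref{theorem3}(iii), whereas the paper writes $\psi(R_0)=\frac{q}{\theta_{hl}}R_0+\frac{v(\theta_{hl},\psi(R_0))}{\theta_{hl}}$ with $v>0$; this is the same fact, and you additionally make explicit the interior-of-$Q_+$ and global-uniqueness checks that the paper leaves implicit.
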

\begin{figure}[h]
	\centering \includegraphics[width=3in]{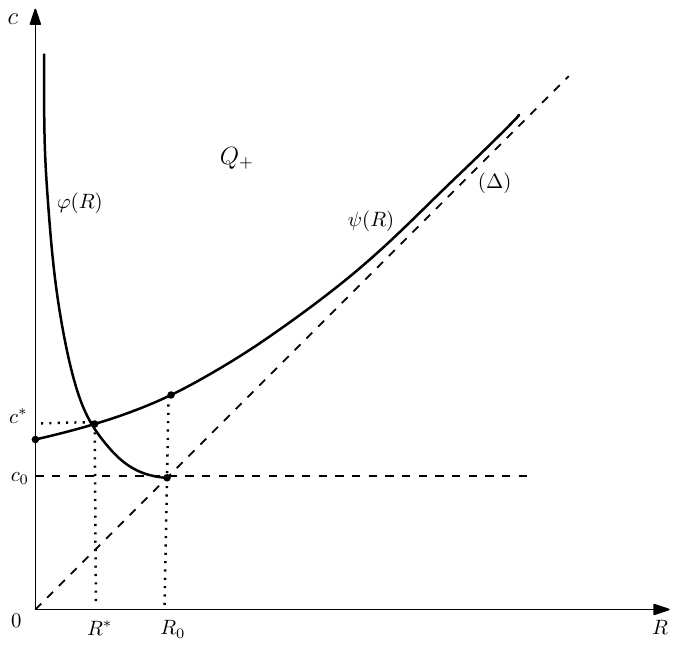}
	\caption{
 \textcolor{black}{The point $(R^{\ast},c^{\ast}) \in \interior({Q}_{+})$} is the only intersection between the curves $\varphi(R)$ and $\psi(R)$.
	Note that the representation of $\psi$ is merely illustrative.}
\label{Figgraph0c1}
\end{figure}

\begin{proof} The corollary easily follows from  Theorem \ref{theorem2} and Theorem \ref{theorem3}, since the smooth decreasing curve $R\mapsto \varphi(R)$ and the smooth increasing curve  \textcolor{black}{$R\mapsto \psi(R)$} have  a unique intersection that belongs to the interior of $Q_{+}$.

More precisely, we have that $$\psi(0)<\lim\limits_{R\to 0^+}\varphi(R)=+\infty$$ and
\begin{equation} \varphi(R_0)=\frac{q}{\theta_{hl}}R_0<\psi(R_0)=\frac{q}{\theta_{hl}}R_0+\textcolor{black}{\frac{v(\theta_{hl},\psi(R_0))}{\theta_{hl}}},
\end{equation}
as $v(\textcolor{black}{\theta_{hl}},\psi(R_0))>0$.
Then, there exists a $(R^{\ast},c^{\ast})$ \textcolor{black}{(see Figure \ref{Figgraph0c1})} such that $$c^{\ast}=\varphi(R^{\ast})=\psi(R^{\ast}),$$ by the \textcolor{black}{intermediate value theorem} and this value is unique, as the function $R \mapsto (\psi-\varphi)(R)$ is increasing.
\end{proof}

We are now in position to prove Theorem \ref{t:1} which is the main result of this paper.
\begin{proof}[Proof of Theorem \ref{t:1}]
(i) Let $(R^{\ast},c^{\ast})$ be given by Corollary \ref{corol}. As in Section \ref{4.1}, we define the function $\theta^{\ast}(x)$ on the interval $\left[R^*,+\infty\right)$ as
\begin{equation}  \theta^{\ast}(x) =\theta_{R^{\ast},c^{\ast}}(x)=u_{\gamma}(x-R^{\ast}, c^{\ast}).
\end{equation}
According to Proposition \ref{theta_R_c}, $\theta^{\ast}$ is the unique solution of \textcolor{black}{\eqref{*}},
\begin{align*}
\theta^{\ast}(R^*)=\theta_{hl}\quad \text{and} \quad \theta^{\ast}(x)\to \theta_{+} \,\,\,\text{as} \,\,\,x\to +\infty.
\end{align*}
Moreover, $\theta^*$ is increasing on $\left[R^*,+\infty\right)$, and \textcolor{black}{according to Proposition \ref{theta-dirivatiove-R}},
$$\theta^{\ast}_{x}(R^*)=c^{\ast}\theta_{hl}-qR^{\ast}>0.$$
\textcolor{black}{In addition, we prove in the Appendix (see Proposition \ref{proposition A.1}) that there exists $K_0>0$, such that
	\begin{align}\label{exponential}
		\theta_+-\theta^*(x)\leqslant K_0e^{\lambda_-x},\:\:x\to+\infty.
	\end{align}}
Therefore, we have established that the triplet $(R^{\ast}, c^{\ast}, \theta^{\ast}) \in  (\R^{+})^2 \times C^1([R^*,+\infty))$ is the unique solution of system \eqref{eq:p10}.\par

(ii) Finally, we return to the system \eqref{eq:p1}-\eqref{eq:p6}. We extend the function $\theta^{\ast}$ to the interval $\left(-\infty,R^*\right)$ as
\begin{eqnarray}
	\theta^{*}(x)=
	\left\{
	\begin{array}{ll}
		\theta_{ig} e^{c^{*}x}, & x\in (-\infty,0), \\[1mm]
		\theta_{ig} e^{c^{*}x} +\frac{q}{c^{*}} x +\frac{q}{(c^*)^2} (1-e^{c^{*}x}), & x\in(0,R^{*}).
	\end{array}
	\right.
\end{eqnarray}
It is clear that $\theta^{*}$ is of class $C^1$ on $(-\infty,R^{*})$, and
\begin{align*}
\theta^{*}(x)\to 0\,\,\,\text{ as}\,\,\, x\to -\infty.
\end{align*}
 Since $c^*=\varphi(R^*)$ (see Corollary \ref{corol}), the pair $(R^{\ast},c^{\ast})$ verifies the transcendental equation \eqref{eq:p81}, and it follows from \eqref{eq:p81}-\eqref{eq:p9} that both $\theta^{*}$ and its derivative $\theta^{*}_x$ have no-jump at $x=R^*$.
Together with part (i), we obtain that $\theta^{\ast}$ belongs to $C^1(\R)$ and the triplet $(R^{*},c^{*},\theta^{*})\in (\R^{+})^2 \times C^1(\R)$ uniquely solves \eqref{eq:p1}-\eqref{eq:p6}, up to translation. It is obvious  that $\theta^*$ is increasing on $(-\infty,0)$. For $0<x<R^*$, from \eqref{eq:p81}, we obtain
$$\theta^*_{x}(x)=(c^*\theta_{hl}-qR^*)e^{c^*(x-R^*)}+\frac{q}{c^*}(1-e^{c^*(x-R^*)}).$$
Since $c^*\theta_{hl}-qR^*>0$ and $1-e^{c^*(x-R^*)}>0$ for $0<x<R^*$,   $\theta^*(x)$ is also increasing on $(0,R^*)$. Therefore, $\theta^*$ is increasing on $\mathbb{R}$.
 This completes the proof of Theorem \ref{t:1}.
\end{proof}

\section*{Acknowledgments}
\textcolor{black}{C.-M.B. would like to express his sincere gratitude to the School of Mathematical Sciences at Tongji University for their warm hospitality during his visits in 2025 and 2026 as a Foreign Expert (National Program)}. M.M. and P.S. were supported by the National Natural Science Foundation of China (No. 12171368). The work of P.V.G was supported, in a part, by US-Israel BSF grant 2024033. The work of R.R. has been achieved in the frame of the EIPHI Graduate school (contract ANR-17-EURE-0002). \textcolor{black}{The authors are grateful to the anonymous reviewers for their detailed and relevant comments which lead to several substantial improvements  of the manuscript.}
\bigskip
\appendix
\section*{Appendix. More properties for $\theta_{R,c}(x)$}
\renewcommand{\thesection}{}
\renewcommand{\thefigure}{A.\arabic{figure}}
\renewcommand{\thesubsection}{A.\arabic{subsection}}
\renewcommand{\thetheorem}{A.\arabic{theorem}}
\renewcommand{\theequation}{A.\arabic{equation}}
\renewcommand{\theproposition}{A.\arabic{proposition}}
\renewcommand{\theremark}{A.\arabic{remark}}
\setcounter{figure}{0}
\setcounter{equation}{0}
\subsection{Introduction}\label{A.1}\mbox{} \vspace{0.5\baselineskip}

In Section \ref{4.1}, we have defined the function $\theta_{R,c}(x)=u_{\gamma}(x-R,c)$, for $x\in [R,+\infty)$. Its principal properties, proved in Proposition \ref{theta_R_c}, are that $\theta_{R,c}(x)\to \theta_+$ for $x\to +\infty$ and that $\frac{d\theta_{R,c}}{dx}>0$ for \textcolor{black}{$x\in [R,+\infty)$}. This function $\theta_{R,c}$ is defined for any function $F$ verifying the condition given in Section \ref{s:4} and for any $R>0$, $c>0$. 
\textcolor{black}{The intent of $\theta_{R,c}$ is to} construct the solution $\theta^*(x)=\theta_{R^*,c^*}(x)$ on the interval $[R,+\infty)$, as proved in Theorem \ref{t:1}.\par
In this appendix, we want to give \textcolor{black}{two additional} properties for $\theta_{R,c}$. \textcolor{black}{First,} the convergence of $\theta_{R,c}(x)$ to $\theta_+$ is exponential, as a consequence of a linearization theorem. \textcolor{black}{Second, if $F'(u)<0$ for $u\in [\theta_{hl}, \theta_+]$, then $\theta_{R,c}$ is concave on $[R,+\infty)$, i.e., $\frac{d^2\theta_{R,c}}{dx^2}(x)<0$ for $x\in[R,+\infty)$; this follows by an argument similar to the one already used in Section \ref{s:4}, for the function $v_c(u)$.}
\subsection{Exponential convergence at $+\infty$}\label{A.2}
\subsubsection{Sternberg-Chen linearization theorem}
The first significant result for the linearization of a differentiable system, at a singular point, was obtained by H. Poincar\'{e} \cite{P}, for complex analytic vector fields. This result was extended by S. Sternberg for real $C^{\infty}$ differentiable systems. He proved in \cite{S} that a smooth vector field is $C^{\infty}$-linearizable around a hyperbolic singular point, in any dimension, if some conditions of non-resonance among the eigenvalues at the singular point are fulfilled. A proof and some related references can be found in \cite{R}.\par
\textcolor{black}{In one dimension, the resonance conditions are automatically satisfied whenever the singularity is hyperbolic, which simply reduces to the condition that the sole eigenvalue $\lambda$ is nonzero. To circumvent technical subtleties in this setting, we invoke the one-dimensional linearization theorem of Sternberg and Chen (see \cite{Chen,S}), stated below.}

\begin{theorem}
	\label{exponal}
	Let $Z$ be a smooth real vector field defined on an interval $[u_0,u_1]$, $u_0\leqslant0\leqslant u_1$,
	\begin{align}
		Z:\frac{du }{dt}=f(u),\:\:\:0\leqslant t<+\infty.
	\end{align}
	Assume that $0$ is the unique root of $f$ on $[u_0,u_1]$. Moreover, we assume that $0$ is a hyperbolic singularity, namely $f'(0)=\lambda\neq 0$.\par
	Then, there exists a smooth diffeomorphism
	$$g:\:[x_0,x_1]\to [u_0,u_1],\:\:\:x_0\leqslant 0\leqslant x_1,$$
	with $g(0)=0$ and $g'(0)>0$, mapping the vector field $Z$ to the linear vector field $Z_0$
	\begin{align}
		Z_0:\:\frac{dx}{dt}=\lambda x,\:\:\:0\leqslant t<+\infty.
	\end{align}
	This means explicitly that:
	\begin{align}
		f(g(x))=\lambda x g'(x), \:\:\: x_0\leqslant x\leqslant x_1.
	\end{align}
	As a consequence, if $\lambda<0$ and $\varphi(t)$ is the trajectory of $u_1$ $(\text{i.e.,}\:\varphi(0)=u_1)$, then
	\begin{align}
		\varphi(t)=g(x_1e^{\lambda t}), \:\:\:0\leqslant t<+\infty.
	\end{align}
\end{theorem}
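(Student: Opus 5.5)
The plan is to construct the conjugacy explicitly, as the inverse of a map $k:[u_0,u_1]\to[x_0,x_1]$ that carries $Z$ to $Z_0$. In this one-dimensional setting the conjugacy equation is a linear first-order ODE for $k$, so it can be integrated in closed form. Writing $g=k^{-1}$, the relation $f(g(x))=\lambda x g'(x)$ is equivalent to
\begin{equation*}
k'(u)\,f(u)=\lambda\,k(u),\qquad u\in[u_0,u_1],
\end{equation*}
together with $k(0)=0$ and $k'(0)>0$.

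\emph{Step 1 (factor $f$).} Since $f(0)=0$ and $f$ is smooth, the Hadamard lemma gives $f(u)=u\,\tilde f(u)$ with $\tilde f(u)=\int_0^1 f'(su)\,ds$ smooth and $\tilde f(0)=\lambda$. Because $0$ is the only root of $f$ on $[u_0,u_1]$, $\tilde f$ has no zero on $[u_0,u_1]\setminus\{0\}$, and $\tilde f(0)=\lambda\neq0$. Hence $\tilde f$ never vanishes and has the sign of $\lambda$ throughout. Set $h=\tilde f/\lambda$, a smooth positive function with $h(0)=1$, so that $f(u)=\lambda u h(u)$.

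\emph{Step 2 (define $k$).} The candidate is
\begin{equation*}
k(u)=u\exp\Big(\int_0^u \rho(s)\,ds\Big),\qquad \rho(s)=\frac{\lambda}{f(s)}-\frac1s=\frac{1}{s}\Big(\frac{1}{h(s)}-1\Big).
\end{equation*}
The only delicate point is the smoothness of $\rho$ at $s=0$, and this is handled again by the Hadamard lemma: $1/h-1$ is smooth and vanishes at $0$, so dividing it by $s$ gives a smooth function. Therefore $k$ is smooth on $[u_0,u_1]$, with $k(0)=0$ and $k'(0)=1>0$.

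Differentiating,
\begin{equation*}
k'(u)=e^{\int_0^u\rho}\big(1+u\rho(u)\big)=e^{\int_0^u\rho}\,\frac{\lambda u}{f(u)}=\frac{\lambda k(u)}{f(u)}\qquad (u\neq0).
\end{equation*}
This is exactly the conjugacy equation. Moreover $\lambda u/f(u)=1/h(u)>0$, so $k'>0$ on all of $[u_0,u_1]$, including $u=0$. Thus $k$ is a smooth increasing diffeomorphism onto $[x_0,x_1]:=[k(u_0),k(u_1)]$, and $x_0\leqslant0\leqslant x_1$ because $k(0)=0$. Set $g=k^{-1}$. Then $g$ is smooth, $g(0)=0$, $g'(0)=1>0$, and inverting the conjugacy equation yields $f(g(x))=\lambda x g'(x)$.

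\emph{Step 3 (the consequence).} Suppose $\lambda<0$ and put $x(t)=x_1e^{\lambda t}$, which stays in $[0,x_1]\subset[x_0,x_1]$ for all $t\geqslant0$. Then
\begin{equation*}
\frac{d}{dt}\,g(x(t))=g'(x(t))\,\lambda x(t)=f(g(x(t))),\qquad g(x(0))=g(x_1)=u_1.
\end{equation*}
So $g(x(t))$ solves $Z$ with the same initial value as $\varphi$, and Cauchy uniqueness gives $\varphi(t)=g(x_1e^{\lambda t})$.

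The main point of care is the regularity at the singular point in Step 2. Each smooth quotient there must be justified by the Hadamard lemma, and this is precisely where hyperbolicity, $\lambda\neq0$, is used: it guarantees that $h$, and hence $1/h$, is smooth and positive at $0$.
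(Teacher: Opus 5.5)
Your proof is correct, and it takes a different route from the paper. The paper does not prove this statement at all. It cites the one-dimensional linearization theorem of Sternberg and Chen, where non-resonance is automatic in dimension one. It then adds a remark showing that a conjugacy $g_0$ with $g_0'(0)<0$ can be turned into one with $g'(0)>0$ by the rescaling $\xi=bx$, $b<0$.

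You instead integrate the conjugacy equation $k'(u)f(u)=\lambda k(u)$ in closed form. This works only because in dimension one it is a scalar linear ODE. Your two applications of the Hadamard lemma handle the regularity at the singular point, and the identity $1+u\rho(u)=1/h(u)$ gives $k'>0$ everywhere.

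Your construction buys several things:
\begin{itemize}
\item It is self-contained and elementary.
\item It yields the conjugacy on the whole interval $[u_0,u_1]$ at once. Sternberg-type theorems, as usually stated, are local near the singular point, so the global formulation tacitly requires extending the local conjugacy along the flow.
\item It uses the hypothesis that $0$ is the only root of $f$ transparently: that hypothesis is exactly what makes $h>0$, and hence $k'>0$, on the whole interval.
\item The normalization $g'(0)=1$ comes for free, so the paper's rescaling remark is unnecessary.
\end{itemize}

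The paper's route buys brevity and places the result inside the general theory, which also covers higher dimensions under non-resonance conditions, where no explicit formula exists. Your Step 3, the uniqueness argument giving $\varphi(t)=g(x_1e^{\lambda t})$, is the same deduction the paper uses implicitly.
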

\textcolor{black}{\begin{remark}
		The assumption $g'(0) > 0$ in Theorem \ref{exponal} can be guaranteed by re-scaling the variable. More precisely, suppose that there exists a smooth diffeomorphism $g_0$ satisfying $g_0(0) = 0$, $g_0'(0) = a < 0$, and
		\begin{equation}\label{sbst}
			f(g_0(x)) = \lambda x g_0'(x), \quad x_0 \leqslant x \leqslant x_1.
		\end{equation}
		Let us introduce a scaled variable $\xi = bx$ ($b < 0$) and define a new mapping $g(\xi) \triangleq g_0\big(\frac{\xi}{b}\big)$, then $g(0) = 0$, $g'(0) = \frac{a}{b} > 0$. Now substitute $x = \frac{\xi}{b}$ into \eqref{sbst}, we can obtain $f(g(\xi)) = \lambda \xi g'(\xi),\ b x_1 \leqslant \xi \leqslant b x_0$. Thus, we construct a smooth diffeomorphism $g: [b x_1, b x_0] \to [u_0, u_1]$, $b x_1 < 0 < b x_0$ mapping the vector field $Z$ to the linear vector field $Z_0$ with $g'(0) > 0$.
	\end{remark}
}
\subsubsection{Proof of the exponential convergence}
We recall that for fixed $c>0$, $S_c$ is the half part of the stable manifold located in the region $[\theta_{hl},\theta_+]\times \mathbb{R}^+$. It is the union of an orbit $\gamma_c$ of the vector field $X_c$ (see \eqref{notations}-\eqref{eq:p12}), $\gamma_c(t)\to p_0=(\theta_+,0)$, and the limit point $p_0$ (see Figure \ref{gamma c}).\par
\begin{figure}[h]
\centering \includegraphics[width=4in]{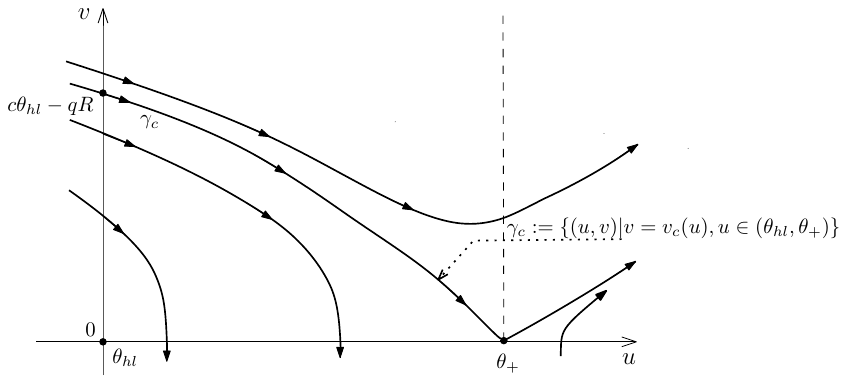}
\caption{Details of the phase portrait of $X_c$ (saddle point).}
\label{gamma c}
\end{figure}
We want to consider the one-dimensional vector field $Z$, restriction of $X_c$, along its invariant curve $S_c$. (In general, we will not mention $c$ in what follows). As $S_c$ is a graph over the interval $[\theta_{hl},\theta_+]$ of the smooth function $v_c(u)$, we can choose $u$ as variable, and $Z$ has a differentiable equation
\begin{equation}\label{acf}
\frac{du}{dt}=f(u),
\end{equation} for a smooth function $f(u)$, $u\in [\theta_{hl},\theta_+]$.\par
In fact, it is easy to check that $f(u)=v_c(u)$ \textcolor{black}{in our case}, but this information will not be used hereafter. \textcolor{black}{And, from Section \ref{s:2}, we can} verify that $\theta_+$ is the unique root of $f$ on $[\theta_{hl},\theta_+]$ and that $f'(\theta_+)=\lambda_-$, which is the negative eigenvalue of $X_c$ at the point $p_0=(\theta_+,0)$, defined in \eqref{eigenvalues}. We also have that $f(\theta_{hl})=c\theta_{hl}-qR$.\par
It is convenient to make the following change of variable:
\begin{align}\label{variable}
U=\theta_+-u.
\end{align}
\textcolor{black}{By abuse of language}, we go on to write
\begin{align}
Z:\:\frac{dU}{dt}=\tilde{f}(U),
\end{align}
where $\tilde{f}(U)=-f(\theta_+-U)$ and  $\tilde{f}:\:[0,\theta_+-\theta_{hl}]\to [qR-c\theta_{hl},0].$\par
On the other hand, it follows from Theorem \ref{exponal} that there exists a $C^\infty$-\break diffeomorphism
$$g:\:[0,x_1]\to[0,\theta_+-\theta_{hl}],$$
with
\begin{align}\label{g(x_0)}
g(x_1)=\theta_+-\theta_{hl},
\end{align}
which maps the vector field $Z_0$ to $X_c$,
\begin{align*}
Z_0:\:\frac{dx}{dt}=\tilde{f}'(0)x,\quad 0\leqslant t<+\infty,
\end{align*}
\textcolor{black}{with $\tilde f'(0)=\lambda_-$}, which means that
\begin{align*}
\tilde{f}(g(x))=\lambda_-xg'(x),\:\:\:0\leqslant x\leqslant x_1.
\end{align*}\par
It follows from Theorem \ref{exponal} that if $U(t)$ is the trajectory from $g(x_1)$ defined by \eqref{g(x_0)}, then
\begin{align}\label{U}
U(t)=g(x_1e^{\lambda_-t}),\:\:\: 0\leqslant t<+\infty.
\end{align}
We can deduce from $\eqref{U}$ the following estimate.
\begin{proposition}\label{proposition A.1}
There exists $K_0>0$  such that
\begin{align}\label{inequality_theta to theta+}
	\theta_+-\theta_{R,c}(x)\leqslant K_0e^{\lambda_-x},\:\:x\to+\infty.
\end{align}
\end{proposition}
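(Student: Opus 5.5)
The plan is to read the estimate directly off the linearized representation \eqref{U}. In the variable $U=\theta_+-u$ of \eqref{variable}, the quantity to bound is exactly $\theta_+-\theta_{R,c}(x)=U(x-R)$. Here $U(t)$ is the trajectory of $Z$ starting at $U(0)=\theta_+-\theta_{hl}=g(x_1)$. This is legitimate because $\theta_{R,c}(x)=u_\gamma(x-R,c)$ and $\gamma_c(0)=(\theta_{hl},v_c(\theta_{hl}))$. Along $S_c$ the $u$-component obeys the scalar equation \eqref{acf}, so $u_\gamma(t,c)$ is precisely the trajectory of $Z$ issued from $\theta_{hl}$. By uniqueness of solutions (the Cauchy theorem), it therefore coincides with $\theta_+-U(t)$.

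Next I use \eqref{U}: $U(t)=g(x_1e^{\lambda_-t})$ for $t\geqslant 0$. The map $g$ is a $C^\infty$ diffeomorphism of $[0,x_1]$ onto $[0,\theta_+-\theta_{hl}]$ with $g(0)=0$. By the mean value theorem, $0\leqslant g(y)\leqslant L\,y$ for $y\in[0,x_1]$, where $L=\max_{[0,x_1]}|g'|<\infty$ by smoothness on a compact interval. Since $\lambda_-<0$, we have $x_1e^{\lambda_-t}\in(0,x_1]$, and hence
$$0<U(t)\leqslant L\,x_1\,e^{\lambda_-t},\qquad t\geqslant 0.$$
Here $x_1>0$ after the orientation normalization of the Remark following Theorem \ref{exponal}.

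Substituting $t=x-R$ gives, for all $x\geqslant R$,
$$\theta_+-\theta_{R,c}(x)\leqslant L x_1 e^{-\lambda_- R}\,e^{\lambda_- x}.$$
So $K_0:=Lx_1e^{-\lambda_-R}>0$ works, and in fact the bound holds on the whole half-line $[R,+\infty)$, not only as $x\to+\infty$.

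The only point needing care is the bookkeeping in applying Theorem \ref{exponal}, which I expect to be the main (mild) obstacle. One must check that $\tilde f$ is smooth on $[0,\theta_+-\theta_{hl}]$ with $0$ as its unique zero, which follows from the smoothness of $v_c$ and $f=v_c>0$ on $[\theta_{hl},\theta_+)$. One must also check that $\tilde f'(0)=\lambda_-$: the slope of the stable manifold at $p_0$ is the eigenvector direction of $\lambda_-$, so $f'(\theta_+)=\lambda_-$. Finally, the interval orientation must be arranged, via the Remark, so that $g$ maps $[0,x_1]$ onto $[0,\theta_+-\theta_{hl}]$ with $g'>0$, which makes $g\geqslant0$ and gives the upper bound above. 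With these checks in place, the estimate is immediate.
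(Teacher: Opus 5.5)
Your proof is correct and follows the paper's route: both identify $\theta_+-\theta_{R,c}(x)$ with $U(x-R)=g(x_1e^{\lambda_-(x-R)})$ from the Sternberg--Chen linearization and read off the exponential decay. The only difference is that you bound $g(y)\leqslant (\max_{[0,x_1]}g')\,y$ via the mean value theorem, whereas the paper uses the expansion $g(y)=g'(0)y+O(y^2)$; your version gives the explicit constant $K_0=Lx_1e^{-\lambda_-R}$ and an estimate valid on all of $[R,+\infty)$, not only asymptotically.
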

\begin{proof}
Let $g$ be the diffeomorphism
\begin{align*}
	g:[0,x_1]\to[0,\theta_+-\theta_{hl}],
\end{align*}
with $g(0)=0$,
$g(x_1)=\theta_+-\theta_{hl}$ and
$g'(0)>0$.
As $$g(x)=g'(0)x+O(x^2),\:\: x\to 0,$$
we have that
\begin{align*}
	g(x_1e^{\lambda_-t})=g'(0)x_1e^{\lambda_-t}+O(e^{2\lambda_-t}),\:\: t\to+\infty.
\end{align*}
From \eqref{variable} and \eqref{U}, we have that
\begin{align}
	U(t)=\theta_+-u(t)=g'(0)x_1e^{\lambda_-t}+O(e^{2\lambda_-t}),\:\:t\to+\infty,
\end{align}
hence
\begin{align}\label{exponent}
	\theta_+-u(t)=O(e^{\lambda_-t}),\:\:t\to+\infty.
\end{align}

We return now to the function $\theta_{R,c}(x)$. Recall \textcolor{black}{from \eqref{notations} and Section \ref{s:4}} that $\theta_{R,c}(x)=u_{\gamma}(x-R,c)=u(x-R)$, where \textcolor{black}{the solution $u(t)$ to \eqref{acf} is also} the notation for $u_{\gamma}(t,c)$. As a consequence of \eqref{exponent}, we have that
$$\theta_+-\theta_{R,c}(x)=O(e^{\lambda_-(x-R)}),\:\:x\to+\infty,$$
and then
$$\theta_+-\theta_{R,c}(x)\leqslant K_0e^{\lambda_-x},\:\:x\to+\infty,$$
for some $K_0>0$.
\end{proof}
\subsection{Concavity of $\theta_{R,c}(x)$}\label{A.3}\mbox{} \vspace{0.5\baselineskip}

  Figure \ref{fig:2} suggests that \textcolor{black}{$\theta_{R,c}$} is concave, i.e., $\frac{d^2\textcolor{black}{\theta_{R,c}}}{dx^2}(x)<0$ for $x\in [R,+\infty)$, at least in the special case \eqref{Fu}, i.e. $F(u)=q-h((1+u)^4-1)$. \textcolor{black}{We aim to establish this concavity for all $\theta_{R,c}$, under mild assumptions on $F(u)$}. In fact, this second derivative is related to the first derivative of the function $v_c(u)$, as we explain now.\par
As $\theta_{R,c}(x)=u_{\gamma}(x-R,c)$, where $(u_{\gamma}(t,c),v_{\gamma}(t,c))$ are the components of the trajectory $\gamma_c(t)$ (see Section \ref{s:4}), then    \begin{align}
\frac{d\theta_{R,c}}{dx}=\frac{du_{\gamma}}{dt}(x-R,c)=v_{\gamma}(x-R,c).
\end{align}
By a new derivative, we obtain
\begin{align}
\frac{d^2\theta_{R,c}}{dx^2}=\frac{dv_c}{du}(u_{\gamma}(x-R,c))\frac{du_{\gamma}}{dt}(x-R,c).
\end{align}
As $\frac{du_{\gamma}}{dt}(x-R,c)>0$ for $x\in [R,+\infty)$, we have that $\frac{d^2\theta_{R,c}}{dx^2}$ and $\frac{dv_c}{du}$ have the same sign. We want to prove now the concavity of $\theta_{R,c}$ under
a mild condition on $F$.
\begin{proposition}
If $F(u)$ satisfies
\begin{equation}\label{conditionM}
	F'(u)<0,\quad u\in [\theta_{hl},\theta_+],
\end{equation}
we have that
\begin{align}
	v'_c(u)=\frac{dv_c}{du}(u)<0,\quad u\in [\theta_{hl},\theta_+].
\end{align}
As a consequence,
\begin{align}
	\frac{d^2\theta_{R,c}}{dx^2}<0,\quad x\in [R,+\infty).
\end{align}
\end{proposition}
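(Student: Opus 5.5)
The plan is to study the separatrix $S_c$ directly as the graph $v=v_c(u)$ and to work with the scalar ODE that this graph satisfies. Since $S_c$ is invariant under $X_c$ and is a graph over $[\theta_{hl},\theta_+]$ (Section \ref{4.1}), along it we have $\frac{dv}{du}=\frac{v'(t)}{u'(t)}=\frac{cv-F(u)}{v}$. This gives
\begin{equation*}
v_c(u)\,v_c'(u)=c\,v_c(u)-F(u),\qquad u\in[\theta_{hl},\theta_+),
\end{equation*}
and the identity extends to $u=\theta_+$ by smoothness. Here $v_c(u)>0$ for $u<\theta_+$, and $v_c(\theta_+)=0$.

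The first step is the endpoint $u=\theta_+$. By the stable manifold theorem, $S_c$ is tangent at $p_0$ to the eigendirection of $\lambda_-$. The linearization of $X_c$ at $p_0$ has matrix $\begin{pmatrix}0&1\\-F'(\theta_+)&c\end{pmatrix}$, whose $\lambda_-$-eigenvector is $(1,\lambda_-)$. Hence $v_c'(\theta_+)=\lambda_-<0$, using \eqref{F'theta+<0}. By continuity, $v_c'<0$ on some interval $(\theta_+-\delta,\theta_+]$.

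The second step is a contradiction argument, which is the heart of the proof and is where \eqref{conditionM} enters. Suppose $v_c'(u)\ge 0$ for some $u\in[\theta_{hl},\theta_+)$. Let $u^*<\theta_+$ be the largest zero of $w:=v_c'$ in $[\theta_{hl},\theta_+)$. Such a zero exists: either $w(u)=0$ directly, or $w(u)>0$ and the intermediate value theorem on $[u,\theta_+-\delta/2]$ produces one. By maximality, $w<0$ on $(u^*,\theta_+]$, so $w'(u^*)\le 0$ (as a one-sided derivative if $u^*=\theta_{hl}$; $v_c$ is smooth on the neighborhood $W$, so this is harmless). Differentiating the identity above in $u$ gives
\begin{equation*}
(v_c')^2+v_c\,v_c''=c\,v_c'-F'(u).
\end{equation*}
At $u^*$ this reduces to $v_c(u^*)\,v_c''(u^*)=-F'(u^*)>0$. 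Since $v_c(u^*)>0$, we get $w'(u^*)=v_c''(u^*)>0$, a contradiction. Hence $v_c'<0$ on all of $[\theta_{hl},\theta_+]$.

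Finally, the concavity statement follows immediately from the relation shown just before the proposition, $\frac{d^2\theta_{R,c}}{dx^2}=v_c'(u_\gamma)\,\frac{du_\gamma}{dt}$, together with $\frac{du_\gamma}{dt}=v_\gamma>0$ from Proposition \ref{theta_R_c}. I expect the only delicate point to be the endpoint step: justifying that $v_c$ is $C^2$ up to $\theta_+$ with slope exactly $\lambda_-$. I would take this from the stable manifold theorem as used in Section \ref{4.1}, where smoothness of the graph on the closed interval $[\theta_{hl},\theta_+]$ was already asserted.
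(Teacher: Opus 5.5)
Your proof is correct, but it takes a different route from the paper's.

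\textbf{The paper's route.} The paper argues geometrically. It uses the nullcline $H_c=\{cv=F(u)\}$, which is a decreasing graph because $F'<0$. Along $H_c$ the field $X_c$ is horizontal and points out of the curved triangle $\mathcal{B}_1$ bounded by $\{u=\theta_{hl}\}$, $\{v=0\}$ and $H_c$. The paper checks $F'(\theta_+)/c<\lambda_-<0$, so $\gamma_c$ lies inside $\mathcal{B}_1$ for large $t$. It then reruns the trapping argument of Section~\ref{4.1} to get $0<v_c<F/c$, which through $v_c'=(cv_c-F)/v_c$ is exactly $v_c'<0$.

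\textbf{Your route.} You work with the scalar identity $v_cv_c'=cv_c-F$ and a last-zero argument for $w=v_c'$. The two proofs rest on the same mechanism:
\begin{itemize}
\item Your identity $v_c(u^*)v_c''(u^*)=-F'(u^*)>0$ is the analytic form of the transversality of $X_c$ to $H_c$. A zero of $v_c'$ is precisely a point where $S_c$ meets $H_c$, and $v_c''(u^*)>0$ says the crossing can only go from below $H_c$ to above it as $u$ increases.
\item Your fact $v_c'(\theta_+)=\lambda_-<0$ plays the role of the paper's slope comparison at $p_0$. Indeed, $\lambda_-^2=c\lambda_--F'(\theta_+)$ gives $\lambda_--F'(\theta_+)/c=\lambda_-^2/c>0$.
\end{itemize}

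\textbf{What each buys.} Your version stays one-dimensional and needs neither a trapping region nor any Poincar\'e--Bendixson-type reasoning. The paper's version gives the geometric picture, with the bound $v_c<F/c$ stated directly; that bound also follows from your conclusion through the identity.

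\textbf{Regularity.} You ask for slightly more than you use. You only need $v_c\in C^1$ up to $\theta_+$ with $v_c'(\theta_+)=\lambda_-$, which comes from the stable manifold theorem. At $u^*<\theta_+$ you have $v_c(u^*)>0$, so $v_c$ is smooth there as a solution of $dv/du=(cv-F(u))/v$; $C^2$ up to $\theta_+$ is never needed.
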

\begin{proof}
Assuming that $F(u)$ satisfies \eqref{conditionM}, we consider the homoclinic curve
\begin{align}
	H_c=&\{cv-F(u)=0\,|\,u\in [\theta_{hl},\theta_+]\}.
\end{align}
The vector field $X_c$ is horizontal, rightway, and transverse to $H_c$, along $H_c\setminus\{(\theta_+,0)\}$.\par
We now consider the curved triangle $\mathcal{B}_1$, obtained \textcolor{black}{by replacing the upper side $S_0$ of the triangle $\mathcal{B}$ (defined in Section \ref{s:4})  by the homoclinic curve $H_c$}. From a direct computation, at the point $p_0$, the slope $\lambda_-$ of the stable eigenspace at the saddle, is greater than the slope $\frac{1}{c}F'(\theta_+)$ of $H_c$, i.e.,
\begin{align*}
	\frac{1}{c}F'(\theta_+)<\lambda_-<0.
\end{align*}
As a consequence, the trajectory $\gamma_c$ is contained in the interior of $\mathcal{B}_1$, for $t$ large enough. We can now repeat the argument given in Section \ref{s:4} with the triangle $\mathcal{B}$, to prove that the graph $S_c$ of $v_c(u)$ is contained in $\mathcal{B}_1$. More explicitly, we have that
\begin{align*}
	0<v_c(u)<\frac{F(u)}{c},\quad u\in [\theta_{hl},\theta_+).
\end{align*}
As the derivation of $v_c$ is given by
\begin{align}
	v_c'(u)=\frac{dv_c}{du}(u)=\frac{cv_c(u)-F(u)}{v_c(u)},
\end{align}
we have that $v_c'(u)<0$ for $u\in [\theta_{hl},\theta_+)$ (recall that, in particular $v_c'(\theta_+)=\lambda_-<0$).
\end{proof}
\begin{remark}
\textcolor{black}{According to \eqref{F'<0},}   condition \eqref{conditionM} is verified in the special case \eqref{Fu}.
\end{remark}










\end{document}